\documentclass{amsart}

\usepackage{amsmath,amssymb,amsfonts,amsthm,mathtools,mathrsfs}
\usepackage[mathscr]{eucal}
\usepackage{cases,empheq,relsize}
\usepackage{color,graphicx,epstopdf,tikz}
\usetikzlibrary{arrows,automata,backgrounds,patterns,calc}
\usepackage{array,tabularx,varwidth,enumerate,verbatim,xparse}
\usepackage[normalem]{ulem}
\usepackage[linkbordercolor=blue]{hyperref}
\hypersetup{
  colorlinks=false,
  linkcolor=blue,
  citecolor=red,
  filecolor=magenta,
  urlcolor=cyan
}

\numberwithin{equation}{section}
\theoremstyle{plain}
\newtheorem{thm}{Theorem}[section]
\newtheorem{cor}[thm]{Corollary}
\newtheorem{lem}[thm]{Lemma}
\newtheorem{prop}[thm]{Proposition}
\theoremstyle{remark}
\newtheorem{rmk}[thm]{Remark}

\newcommand{\R}{\mathbb R}
\newcommand{\C}{\mathbb C}
\newcommand{\Z}{\mathbb Z}
\newcommand{\Sf}{S_m^\gamma f}

\author[Jeong]{Eunhee Jeong}
\address[Eunhee Jeong]{%
  Department of Mathematics Education, and Institute of Pure and Applied Mathematics, Jeonbuk National University, Jeonju 54896, Republic of Korea}
\email{eunhee@jbnu.ac.kr}

\author[Ko]{Hyerim Ko}
\address[Hyerim Ko]{%
  Department of Mathematics, and Institute of Pure and Applied Mathematics, Jeonbuk National University, Jeonju, 54896, Republic of Korea}
\email{kohr@jbnu.ac.kr}

\begin{document}

\title[Orthonormal maximal estimates along H\"older curves]
{Orthonormal maximal estimates for fractional Schr\"odinger operators along H\"older curves}

\keywords{Fractional Schr\"odinger equation, orthonormal system, maximal estimate, Schatten class, tangential curve}
\subjclass[2020]{35B45 (Primary), 35Q41 and 47B10 (Secondary)}
\begin{abstract}
We establish orthonormal maximal estimates for one-dimensional fractional Schr\"odinger operators along curves that are H\"older continuous in time and bi-Lipschitz in space.
For both the convex range $m>1$ and the concave range
$0<m<1$, we obtain sharp Sobolev regularity up to endpoint in terms of the order $m$. As an application, we obtain almost everywhere convergence of the associated densities to their initial values along the curves. The proof combines Schatten duality with kernel estimates.  
\end{abstract}

\maketitle

\section{Introduction}

\subsection{Classical pointwise convergence}
Let $d\ge1$ and $m>0$. We consider the fractional Schr\"odinger equation
\begin{align}\label{frac sch}
i\partial_t u  + (-\Delta)^{m/2}u=0, \quad u(x,0)=f(x),
\end{align}
with initial data $f\in H^s(\R^d)$. Here, $H^s(\mathbb R^d)$ is the inhomogeneous Sobolev space equipped with the norm $\|f\|_{H^s}=\|\langle D\rangle^sf\|_{L^2}$, where $\langle D\rangle=(1-\Delta)^{1/2}$. The solution of \eqref{frac sch} is formally given by 
\[
e^{it(-\Delta)^{m/2}}f(x)=(2\pi)^{-d}\int e^{ix\cdot \xi+it|\xi|^m} \widehat f(\xi)\,d\xi
\]
where $\widehat f(\xi)=\int_{\R^d} e^{-ix\cdot\xi}f(x)\,dx$. The pointwise convergence problem is to determine the smallest regularity $s$ for which
\begin{align}\label{pt conv}
\lim_{t\rightarrow0} e^{it(-\Delta)^{m/2}}f(x)=f(x), \quad \text{a.e.} \ x\in \R^d
\end{align} 
holds for every $f\in H^s(\mathbb R^d)$.

When $m=2$, \eqref{frac sch} is the Schr\"odinger equation. 
In one dimension, Carleson \cite{Carleson} proved \eqref{pt conv} for $s\geq1/4$, and Dahlberg--Kenig \cite{DK82} showed that this condition is sharp. In higher dimensions, Du--Guth--Li \cite{DGL17} for $d=2$ and Du--Zhang \cite{DZ19} for $d\ge3$ established \eqref{pt conv} for $s>\frac{d}{2(d+1)}$. This range is sharp up to endpoint by Bourgain's counterexample \cite{Bourgain16}. See also the references therein for earlier developments.

When $m>1$, Sj\"olin \cite{Sjolin87} showed that, in one dimension, \eqref{pt conv} holds for every $f\in H^s(\mathbb R)$ if and only if $s\ge1/4$ (see also \cite{KPV91}). 
In higher dimensions, he obtained convergence for $s\ge1/2$ when $d=2$ and for $s>1/2$ when $d\ge3$. Subsequently, further progress has been made by several authors \cite{MYZ, CK22} for $d=2$.

When $0<m<1$, Cowling \cite{Cowling} proved \eqref{pt conv} for $s>m/2$ in every dimension. In one dimension, Walther \cite{Walther} showed that the associated $L^2$ maximal estimate holds for $s>m/4$ and fails for $s<m/4$. When $m=1$, which corresponds to the wave equation, \eqref{pt conv} holds if and only if $s>1/2$ in any dimension. See \cite{Cowling,Walther2}.

\subsection{Convergence along curves}
Our main focus is the one-dimensional problem of pointwise convergence along curves. Let $B=[-1,1]$ and $I=[0,1]$, and let $\gamma$ be a continuous map satisfying
\begin{align}\label{curve define}
\gamma:B\times I\rightarrow \R, \qquad \gamma(x,0)=x.
\end{align} 
The fractional Schr\"odinger operator along $\gamma$ is given by 
\begin{equation*}
\Sf(x,t)
=\frac{1}{2\pi}\int_\mathbb R e^{i\gamma(x,t)\xi+it|\xi|^m}\widehat f(\xi)\,d\xi.
\end{equation*}
We are interested in determining the smallest regularity $s$ so that
\begin{align}\label{pt along curve}
\lim_{t\rightarrow0} \Sf(x,t)=f(x), \quad \text{a.e.} \ x\in \R
\end{align}
for every $f \in H^s(\R)$. The choice $\gamma(x,t)=x$ recovers the convergence along the vertical line \eqref{pt conv}.

For the Schr\"odinger equation $(m=2)$, Sj\"ogren--Sj\"olin \cite{SjSj} showed that convergence within a non-tangential region can fail for $s\le d/2$. Johansson \cite{Johansson} extended this counterexample to a class of dispersive symbols that includes $|\xi|^m$ for every $m>1$. Along a curve $\gamma$ satisfying \eqref{curve define}, lower regularity may suffice. Motivated by the quantum harmonic oscillator, Lee--Rogers \cite{LeeRogers} showed that vertical maximal estimates imply convergence along $C^1$ curves with the same regularity, up to endpoints. 
More generally, curves that are H\"older continuous in $t$ and bi-Lipschitz in $x$, allowing both tangential and non-tangential approaches, were studied by Cho--Lee--Vargas \cite{CLV}. 

To be precise, let $0<\kappa\le1$. We denote by $\Gamma(\kappa)$ the class of continuous functions $\gamma$ satisfying \eqref{curve define} for which there exist constants $A_\gamma>0$ and $B_\gamma>1$ such that the $\kappa$-H\"older
condition in the time variable
\begin{equation}\label{holder-k}
 |\gamma(x,t)-\gamma(x,t')|
 \leq A_\gamma |t-t'|^\kappa,
\end{equation}
and the bi-Lipschitz condition in the spatial variable
\begin{equation}\label{lip-x}
 B_\gamma^{-1}|x-y|
 \leq
 |\gamma(x,t)-\gamma(y,t)|
 \leq
 B_\gamma|x-y|
\end{equation}
hold for all $x,y\in B$ and $t,t'\in I$. Since $\gamma(x,0)=x$, the H\"older condition \eqref{holder-k} implies $|\gamma(x,t)-x|\le A_\gamma t^\kappa$. Hence, if $\kappa=1$, the point
$(\gamma(x,t),t)$ remains in the cone $\{(y,t):|y-x|\le A_\gamma t\}$ and converges to $(x,0)$ \textit{non-tangentially} with respect to the line $\{t=0\}$, whereas for $0<\kappa<1$ the class $\Gamma(\kappa)$ also contains curves along which the convergence is \textit{tangential}. Typical examples are the curves
\begin{equation*}
 \gamma_{\theta}^{\rho}(x,t):=x-\theta t^{\rho},\qquad \rho,\theta>0,
\end{equation*}
which belong to $\Gamma(\kappa_\rho)$ with $\kappa_\rho:=\min\{\rho,1\}$. These are \textit{tangential} for $0<\rho<1$ and \textit{non-tangential} for $\rho\ge1$.  Note that $\Gamma(1)\subseteq\Gamma(\kappa)$ for every $0<\kappa\leq1$, and the vertical line $\gamma(x,t)=x$ belongs to all these classes. The restriction $\kappa\le1$ is natural, since any function satisfying the H\"older condition \eqref{holder-k} with $\kappa>1$ must be constant in the time variable. Hence, the condition $\gamma(x,0)=x$ forces $\gamma(x,t)=x$.

The following theorem summarizes the sharp $L^2$ maximal estimates for curves in $\Gamma(\kappa)$ when $m>0$ and $m\neq1$.

\begin{thm}[{\cite{CLV,CS21,YZ}}]\label{thm:single}
Let $d=1$, $m>0$, $m\neq1$, $0<\kappa\le 1$, and $\gamma \in \Gamma(\kappa)$. Then the maximal estimate
\begin{equation}\label{maxL2}
\big\|\sup_{t\in I} |\Sf(\cdot,t)|\big\|_{L^2(B)} \lesssim \|f\|_{H^s(\mathbb R)}
\end{equation}
holds for all $f\in H^s(\R)$ in the following cases:
\begin{enumerate}[$(i)$]
	\item $s>\max\big\{\frac14,\frac{1-m\kappa}2\big\}$ if $m>1$,
	\item $s>\max\big\{\frac12-\frac m4, \frac{1-m\kappa}2\big\}$ if $0<m<1$.
\end{enumerate}
\end{thm}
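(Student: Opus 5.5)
We will prove \eqref{maxL2} by the standard route: Littlewood--Paley decomposition in frequency, then a $TT^*$ argument for each dyadic piece combined with a kernel estimate. Write $f=\sum_{N}P_Nf$ with $\widehat{P_Nf}$ supported in $|\xi|\sim N$ (and $P_{\le1}$ handled trivially, since the kernel is then bounded). It suffices to show, for each dyadic $N\ge1$ and any $\epsilon>0$,
\begin{equation*}
\big\|\sup_{t\in I}|S_m^\gamma P_Nf(\cdot,t)|\big\|_{L^2(B)}\lesssim N^{s_0+\epsilon}\|f\|_{L^2},
\end{equation*}
where $s_0=\max\{\frac14,\frac{1-m\kappa}2\}$ for $m>1$ and $s_0=\max\{\frac12-\frac m4,\frac{1-m\kappa}2\}$ for $0<m<1$. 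Summing over $N$ then gives \eqref{maxL2} for $s>s_0$. Linearizing the supremum by a measurable $t(x)$, $TT^*$ reduces matters to $L^2(B)$-boundedness, with norm $N^{2s_0+2\epsilon}$, of the operator with kernel
\begin{equation*}
K_N(x,y)=\int e^{i(\gamma(x,t(x))-\gamma(y,t(y)))\xi+i(t(x)-t(y))|\xi|^m}\chi(\xi/N)\,d\xi .
\end{equation*}

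\textbf{Time localization.} We split $I$ into intervals $J$ of length $\delta=N^{-m}$ (times $N^{\epsilon}$ if needed). On a single $J$, the phase $t|\xi|^m$ varies by $O(1)$. By \eqref{holder-k}, $\gamma(x,t)$ moves by at most $A_\gamma\delta^\kappa$, which in frequency $N$ costs $N\delta^\kappa=N^{1-m\kappa}$ worth of spatial uncertainty. Bounding the local maximal function trivially, by Bernstein/Sobolev in $t$ or by the $L^\infty$ bound on the kernel over a window of width $\delta^\kappa$, gives the contribution $N^{(1-m\kappa)/2}$. Concretely, when $|t(x)-t(y)|\lesssim N^{-m}$, we bound $|K_N|\lesssim N$ and use that the effective support has $|x-y|\lesssim N^{-1}+\delta^\kappa$, by \eqref{lip-x} and \eqref{holder-k}. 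Schur's test then gives $N\cdot\max(N^{-1},N^{-m\kappa})$, i.e. the term $N^{\max(0,1-m\kappa)}=N^{2\cdot\frac{1-m\kappa}2}$.

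\textbf{Separated times.} When $|t(x)-t(y)|\gg N^{-m}$, we use the oscillatory integral estimate for $\int e^{ia\xi+ib|\xi|^m}\chi(\xi/N)d\xi$ with $b=t(x)-t(y)$ and $a=\gamma(x,t(x))-\gamma(y,t(y))$. Stationary phase with $|\partial_\xi^2(b|\xi|^m)|\sim |b|N^{m-2}$ gives the van der Corput bound $|K_N|\lesssim (|b|N^{m-2})^{-1/2}$. Non-stationary phase gives rapid decay unless $|a|\sim |b|N^{m-1}$. Hence $|K_N(x,y)|\lesssim N^{1-m/2}|a|^{-1/2}$ up to tails. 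By the bi-Lipschitz condition \eqref{lip-x} together with \eqref{holder-k}, $|a|\gtrsim |x-y|-A_\gamma|b|^\kappa$. In the relevant region (since $|b|\lesssim|a|N^{1-m}$), we can compare $|a|$ with $|x-y|$ up to the already-handled local error. This yields $|K_N(x,y)|\lesssim N^{1-m/2}|x-y|^{-1/2}$, and Schur's test over $B$ gives the norm $N^{1-m/2}$, i.e. the exponent $\frac12-\frac m4$. This is the correct bound for $0<m<1$.

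For $m>1$, the exponent $\frac12-\frac m4$ is worse than $\frac14$ only when $m<1$; for $m>1$ we instead need the Kenig--Ponce--Vega/Sjölin bound $|K_N|\lesssim |x-y|^{-1/2}$ uniformly, with $N^{1/2}$ after Littlewood--Paley, giving $\frac14$. This uses the uniform estimate $|\int e^{ia\xi+ib|\xi|^m}|\xi|^{-(m-2)/2}\ldots|\lesssim|a|^{-1/2}$, which we obtain by the same stationary phase with the weight $N^{(2-m)/2}$ absorbed into the symbol. Either way, a Hardy--Littlewood--Sobolev/Schur bound on $|x-y|^{-1/2}$ on $B$ closes the estimate.

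\textbf{Main obstacle.} The delicate step is replacing $|a|$ by $|x-y|$: the curve shift $\gamma(x,t(x))-x$ depends on $t(x)$, which is arbitrary, so the kernel is not translation invariant. We will handle this by decomposing $|t(x)-t(y)|\sim 2^{-k}$. For each scale we compare $a$ with $\gamma(x,t(x))-\gamma(y,t(x))$ plus an error $\le A_\gamma 2^{-k\kappa}$, and use \eqref{lip-x} to get $|\gamma(x,t(x))-\gamma(y,t(x))|\sim|x-y|$. Where the error dominates, i.e. $|x-y|\lesssim 2^{-k\kappa}$, we use the van der Corput bound $(2^{-k}N^{m-2})^{-1/2}$ directly on a set of width $2^{-k\kappa}$. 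Summing in $k$ reproduces the $N^{1-m\kappa}$ term, with $\epsilon$-losses that are harmless since the statement is non-endpoint.
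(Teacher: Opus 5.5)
Your argument is essentially the paper's proof: the paper obtains Theorem~\ref{thm:single} as the case $q=2$, $\beta=1$ of Theorems~\ref{thm:max} and~\ref{thm:small}. It uses Littlewood--Paley pieces, $TT^*$ kernel bounds split according to $|x-y|\ge C_0|t-t'|^\kappa$ (non-stationary phase, or van der Corput giving $\lambda(1+\lambda|x-y|)^{-1/2}$) versus $|x-y|\le C_0|t-t'|^\kappa$ (van der Corput giving $\lambda^{1-m/2}|x-y|^{-1/(2\kappa)}$), then a Schur/Young bound (Lemma~\ref{lem:DS} with $q=\infty$), summing dyadically in $|x-y|\sim2^{-\ell}$ where you sum dyadically in $|t(x)-t(y)|\sim2^{-k}$, which is the same computation.

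Two slips should be fixed. First, the bound $|K_N|\lesssim N^{1-m/2}|x-y|^{-1/2}$ asserted in your separated-times step fails for $\kappa<1$ in the region $|x-y|\lesssim|t(x)-t(y)|^\kappa$, where $|a|$ can be far smaller than $|x-y|$ and this error is not ``already handled''; that step must be replaced by your main-obstacle decomposition, which correctly gives $N^{\max\{1-m/2,\,1-m\kappa\}}$ up to a logarithm. Second, the Kenig--Ponce--Vega weight should be $|\xi|^{-1/2}$, not $|\xi|^{(2-m)/2}$, although the bound $|K_N|\lesssim N^{1/2}|a|^{-1/2}$ you actually need is correct and follows from van der Corput at the stationary scale $|a|\sim|t(x)-t(y)|N^{m-1}$.
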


The critical regularity in Theorem \ref{thm:single} is sharp up to endpoint. For the model curve $\gamma(x,t)=x-t^\kappa$, the estimate \eqref{maxL2} fails if $s<\max\{\frac14,\frac{1-m\kappa}{2}\}$ when $m>1$, and if $s<\max\{\frac12-\frac m4,\frac{1-m\kappa}{2}\}$ when $0<m<1$. See Proposition \ref{prop:nece}.

The case $(i)$ is due to Cho--Lee--Vargas \cite{CLV} for $m=2$ and to Cho--Shiraki \cite{CS21} for general $m>1$, and the case $(ii)$ is due to Yuan--Zhao \cite{YZ}. A standard density argument shows that \eqref{maxL2} implies  \eqref{pt along curve}. See, for example, \cite{CLV,CS21}.

We note that the two cases behave differently in comparison with the vertical line $\gamma(x,t)=x$. For non-tangential curves $\gamma \in \Gamma(1)$, Theorem \ref{thm:single} gives the critical regularity $s>1/4$ when $m>1$ and $s>1/2-m/4$ when $0<m<1$. The former coincides with the vertical case, whereas the latter is strictly larger than the vertical case $s>m/4$ obtained by Walther \cite{Walther}. This loss is already necessary for the non-tangential curves $\gamma_\theta^\rho$ with $\rho\ge1$ and $\theta>0$. Cho--Shiraki \cite{CS22} showed that $s>\frac12-\frac m4$ is sharp for the associated $L^2$ maximal estimates, up to endpoint.

Several refinements are also known. Cho--Lee \cite{CL14} obtained bounds for the dimension of the divergence sets when $m=2$. The results of \cite{CS21,YZ,CS22} include maximal estimates with respect to $\alpha$-dimensional measures. 
Li--Wang \cite{LW} established sharp $L^p_x$ maximal estimates along H\"older curves for $m=2$, which are recovered for $p\ge 2$ by the single function case of Theorem \ref{thm:max}.

In higher dimensions, several results are known for $m=2$. For curves of the form $x+\eta(t)$, where $\eta(0)=0$ and $\eta$ is $\kappa$-H\"older continuous with $1/2\le \kappa<1$, Li--Wang \cite{LW} obtained convergence for $s>3/8$ when $d=2$. Cao--Miao \cite{CM} obtained sharp regularity $s>d/(2(d+1))$ for $d\ge2$ and a class of curves of the form $x+t^\kappa \eta(t)$ satisfying additional smoothness and derivative bounds. Minguill\'on \cite{Ming} proved convergence for $s>\frac{d}{2(d+1)}$ for an arbitrary $\kappa$-H\"older curve $\eta$ with $1/2\le \kappa \le 1$. This is sharp up to endpoints and coincides with that for the vertical line. For the model case $\eta(t)=(t^{\kappa_1},\dots,t^{\kappa_d})$, where $\kappa_j>0$ and $\kappa=\min_j\kappa_j<1/2$, Minguill\'on--Soria--Vargas \cite{MSV} determined the critical regularity $\max\{\frac{1-2\kappa}{2},\frac{d}{2(d+1)}\}$, which is sharp up to the endpoint.

\subsection{Orthonormal maximal estimates}
We extend the pointwise convergence problem along curves \eqref{pt along curve} to orthonormal systems by establishing maximal estimates of the form
\[
\Big\| \sum_j \mu_j| \Sf_j|^2 \Big\|_{L_x^{\frac q2}(B;L_t^\infty(I))} \lesssim \|\mu\|_{\ell^\beta},
\]
where $2\le q \le \infty$, $1\le \beta \le \infty$, $(f_j)$ is orthonormal in $H^s(\R)$, and $\mu=(\mu_j)\subset \C$. Here, orthonormality in $H^s(\R)$ means that $(\langle D\rangle^s f_j)$ is orthonormal in $L^2(\R)$. 

The case $\beta=1$ follows from the corresponding single-function estimates by the triangle inequality. Since the estimate becomes stronger as $\beta$ increases, the goal is to determine the optimal summability exponent by exploiting orthogonality.

Classical inequalities for orthonormal systems include the Lieb--Thirring inequality \cite{LT} and Lieb's Hardy--Littlewood--Sobolev inequality \cite{Lieb83} (see \cite{Frank23} for a survey). Bez--Rogers--Toyoshima \cite{BRT26} recently extended Lieb's Hardy--Littlewood--Sobolev inequality for orthonormal functions to fractal measures.

In the dispersive setting, orthonormal Strichartz estimates were first established by Frank--Lewin--Lieb--Seiringer \cite{FLLS} and further developed by Frank--Sabin \cite{FS_AJM}. More recently, Hadama--Toyoshima \cite{HT26} proved a conjectured endpoint estimate with a Lorentz sequence norm in dimension $d\ge2$. Bez--Hong--Lee--Nakamura--Sawano \cite{BHLNS} considered initial data in homogeneous Sobolev spaces, and Bez--Lee--Nakamura \cite{BLN_Forum} obtained analogous estimates for other dispersive equations, including the fractional Schr\"odinger equation.

Maximal estimates and pointwise convergence for orthonormal systems were first studied by Bez--Lee--Nakamura \cite{BLN_Selecta} for the Schr\"odinger equation in one dimension, and were extended by Bez--Kinoshita--Shiraki \cite{BKS} to fractional Schr\"odinger equations and to higher dimensions. In one dimension, the proof of \cite[Theorem~2.2]{BKS} yields the following: for $1\le\beta<\infty$ and $0\le s<1/2$, the estimate
\begin{align}\label{BKS-max}
\Big\|\sum_j\mu_j
\big|e^{it(-\Delta)^{m/2}}f_j\big|^2\Big\|_{L_x^\beta (B;L_t^\infty(I))}
\lesssim \|\mu\|_{\ell^\beta}
\end{align}
holds for all orthonormal systems $(f_j)_j$ in $\dot H^s(\R)$ provided that
\begin{equation}\label{e:BKS}
s>
\begin{cases}
\max\big\{\frac14,\ \frac12\big(1-\frac1\beta\big)\big\}, & m>1,\\[4pt]
\max\big\{\frac12\big(1-\frac1\beta\big),\
\frac12-\frac m4-\frac{1-m}{2\beta}\big\}, & 0<m<1,
\end{cases}
\end{equation}
and this range is sharp up to endpoints.\footnote{Theorem 2.2 of \cite{BKS} is stated with the $L_x^1$ norm. The frequency-localized Schatten estimates in \cite[Section~6.2]{BKS} also imply the $L_x^\beta$ estimate \eqref{BKS-max} under the regularity condition \eqref{e:BKS}. For $m=2$, the argument is detailed in \cite[Appendix A]{JKS}.} Related maximal estimates were obtained for the wave and Boussinesq equations in \cite{KKS,KLS} and \cite{YLYL}, respectively, and for the Schr\"odinger equation on tori and waveguide manifolds by Bhimani--Choudhary \cite{BC26}.

The above results concern the vertical line $\gamma(x,t)=x$. For the Schr\"odinger equation $(m=2)$ in $\R^d$, $d\ge1$, the authors and Shiraki \cite{JKS} showed that, in the diagonal case $q=2\beta$, curves $\gamma$ satisfying \eqref{holder-k} with $\kappa=1$ and \eqref{lip-x} have the same regularity as the vertical line, up to endpoints. In particular, the analogue of \eqref{BKS-max} along such $\gamma$ holds for orthonormal systems in $H^s(\R^d)$ when
\[
s>\max\Big\{\frac d2\Big(1-\frac1\beta\Big),\
\frac{d}{2(d+1)}\Big(d-\frac{d-1}{\beta}\Big)\Big\},
\]
see \cite[Theorem 1.5 and Corollary 1.8]{JKS}. When $d=1$, this coincides with \eqref{e:BKS} for the case $m=2$.
The argument in \cite{JKS} relies on the Lipschitz regularity of $\gamma$ in $t$ and does not cover general curves that are only $\kappa$-H\"older in $t$ with $0<\kappa<1$. In this paper we establish orthonormal maximal estimates along $\gamma\in\Gamma(\kappa)$ for both $m>1$ and $0<m<1$. We state the case $m>1$ first.

\medskip

\textbf{The case $m>1$.}
For $0<\kappa\leq1$, $2\leq q\leq\infty$, and $1\leq\beta\leq\infty$, set
\begin{equation*}
s_{m,\kappa,q,\beta}:=
\max\Big\{ \frac14, \ \frac12-\frac{\min\{m\kappa,1\}}q, \ \frac 12-\frac{\min\{m\kappa,1\}}{2\beta} \Big\}.
\end{equation*}

\begin{thm}\label{thm:max}
Let $m>1$, $0<\kappa\leq1$, $2\le q \le \infty$, $1\leq\beta\leq\infty$, and $\gamma\in\Gamma(\kappa)$.
If $s>s_{m,\kappa,q,\beta}$, then
\begin{equation}\label{max-main}
 \Big\|\sum_j\mu_j|\Sf_j|^2\Big\|_
 {L_x^{\frac q2}(B;L_t^\infty(I))}
 \lesssim \|\mu\|_{\ell^\beta}
\end{equation}
holds for every orthonormal system $(f_j)$ in $H^s(\mathbb R)$ and every sequence $\mu=(\mu_j)\in \ell^\beta(\C)$.
Moreover, the regularity exponent is sharp up to endpoint.
\end{thm}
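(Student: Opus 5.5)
We reduce \eqref{max-main} to frequency-localized Schatten estimates via duality and then sum dyadic pieces. Let $P_N$ be a Littlewood--Paley projection to $|\xi|\sim N$ and write $T_N g(x,t)=S_m^\gamma P_N\langle D\rangle^{-s}g(x,t)$. Then $\sum_j\mu_j|S_m^\gamma f_j|^2$ with $(f_j)$ orthonormal in $H^s$ is $\sum_j \mu_j|T g_j|^2$ with $(g_j)$ orthonormal in $L^2$. By the Frank--Sabin duality principle, the estimate
\[
\Big\|\sum_j\mu_j|T_Ng_j|^2\Big\|_{L_x^{q/2}L_t^\infty}\lesssim N^{2s_0+\epsilon}\|\mu\|_{\ell^\beta}
\]
is equivalent to
\[
\|W_1T_NT_N^*\overline{W_2}\|_{\mathfrak S^{\beta'}}\lesssim N^{2s_0+\epsilon}\|W_1\|_{L_x^{r}L_t^2}\|W_2\|_{L_x^{r}L_t^2},
\]
where $\frac1r=\frac12-\frac1q$ (so $\frac{2}{r}=1-\frac2q$) and $L_t^2$ is taken with respect to a measure replacing the sup in $t$. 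To avoid measure-theoretic issues, we linearize the sup as $t=t(x)$, which turns $L_t^\infty$ into a measurable choice. The kernel of $T_NT_N^*$ is
\[
K_N(x,t;y,t')=\int e^{i(\gamma(x,t)-\gamma(y,t'))\xi+i(t-t')|\xi|^m}\chi(\xi/N)\langle\xi\rangle^{-2s}d\xi .
\]
We then interpolate (complex interpolation in Schatten classes, using an analytic family in $s$) between two endpoints. The $\mathfrak S^2$ (Hilbert--Schmidt) bound reduces to an $L^{r/2}$-type estimate of $|K_N|^2$, which we treat by a Hardy--Littlewood--Sobolev argument in $x$. The $\mathfrak S^\infty$ bound is the single-function estimate, i.e. Theorem \ref{thm:single} in $L^q_x$ form with $s>\max\{1/4,\frac{1-m\kappa}2\}$ (Li--Wang type).

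The key step is the kernel estimate. For fixed frequency $N$, we use stationary phase and van der Corput to obtain
\[
|K_N|\lesssim N^{1-2s}\min\bigl\{1,(N^m|t-t'|)^{-1/2}\bigr\}
\]
in the nonstationary-free regime. We also obtain rapid decay when $|\gamma(x,t)-\gamma(y,t')|\gg N^{m-1}|t-t'|$. The H\"older condition \eqref{holder-k} and \eqref{lip-x} give
\[
|\gamma(x,t)-\gamma(y,t')|\ge B_\gamma^{-1}|x-y|-A_\gamma|t-t'|^\kappa .
\]
Hence either $|x-y|\lesssim N^{m-1}|t-t'|+|t-t'|^\kappa$, or the kernel decays rapidly. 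For $|x-y|\gtrsim N^{-1}$, this yields a bound $|K_N|\lesssim N^{1-2s}(N|x-y|)^{-\alpha}$, uniform in $t,t'$, with $\alpha=\min\{m\kappa,1\}/\ldots$. The point is that $|t-t'|\gtrsim \min(|x-y|^{1/\kappa},N^{1-m}|x-y|)$ can be inserted into the stationary-phase decay $(N^m|t-t'|)^{-1/2}$. This gives $(N^{m\kappa}\cdot N^{-?}\ldots)$; working it out, the decay in $|x-y|$ is governed by the exponent $\min\{m\kappa,1\}$. This is exactly the quantity appearing in $s_{m,\kappa,q,\beta}$, producing the terms $\frac12-\frac{\min\{m\kappa,1\}}{q}$ and $\frac12-\frac{\min\{m\kappa,1\}}{2\beta}$. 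We then apply HLS for orthonormal-type Schatten bounds (the Hilbert--Schmidt norm of $|x-y|^{-\alpha}$ with weights in $L^r_x$) at the $\mathfrak S^2$ endpoint. Interpolating with the $\mathfrak S^\infty$ endpoint and summing over $N$, with an $\epsilon$ loss from $s>s_{m,\kappa,q,\beta}$, gives \eqref{max-main}. The region $\beta$ large (where $\frac12-\frac{\min\{m\kappa,1\}}{2\beta}$ dominates) comes from the trivial $\beta=\infty$ bound $\sum_j|T_Ng_j|^2\le\|K_N\|_{\rm diag}\sim N^{1-2s}$ interpolated with the above.

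Sharpness we would show separately, following the single-function counterexamples for $\gamma(x,t)=x-t^\kappa$. We take $f_j$ as $N$-frequency wave packets concentrated along the curve, in the manner of Bez--Lee--Nakamura/BKS, with roughly $N^{\min\{m\kappa,1\}}$ orthonormal functions spread in $x$ so that their sup-in-$t$ profiles overlap on $B$. Testing $\mu_j=1$ then forces the last two terms. The main obstacle is the kernel estimate: handling $\kappa<1$ uniformly over the linearized times $t(x),t'(y)$ so that one obtains a clean $|x-y|^{-\min\{m\kappa,1\}}$-type decay usable in the Schatten/HLS step.
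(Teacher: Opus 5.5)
\textbf{The interpolation step cannot give the sharp exponent, and the problem is structural, not a matter of constants.} You fix $q$, and hence the weight space $L^r_xL^2_t$ with $\frac1r=\frac12-\frac1q$. You then interpolate the whole frequency-localized operator $W_1T_NT_N^*\overline{W_2}$ between a $\mathcal C^\infty$ bound ($\beta=1$) and a $\mathcal C^2$ bound ($\beta=2$). Complex interpolation only yields the chord between the two endpoint exponents. But $2s_{m,\kappa,q,\beta}=\max\{\frac12,1-\frac{2\mu}{q},1-\frac{\mu}{\beta}\}$, with $\mu=\min\{m\kappa,1\}$, is a maximum of affine functions of $1/\beta$ with interior kinks, so at a kink you lose no matter how well the endpoints are proved.

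Concretely, take $m\kappa=\frac34$ (e.g.\ $m=\frac32$, $\kappa=\frac12$) and the diagonal pair $(q,\beta)=(3,\frac32)$, whose target exponent is $\frac12$. At $q=3$ the endpoint exponents can be no better than $2s_{m,\kappa,3,1}=\frac12$ and $2s_{m,\kappa,3,2}=\frac58$; otherwise summing over $N$ would contradict Proposition~\ref{prop:nece}. So at $\beta'=3$ the chord gives at best $\frac13\cdot\frac12+\frac23\cdot\frac58=\frac7{12}>\frac12$. Interpolating the whole operator along the diagonal between $(2,1)$ and $(4,2)$ gives the same $\frac7{12}$. Likewise, your Bessel interpolation for $\beta>2$ is lossless only along the diagonal $q=2\beta$, where the target $1-\frac\mu\beta$ is affine in $1/\beta$. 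At fixed $q$ (e.g.\ $m\kappa\ge1$, $q=8$, $\beta=4$) the chord between $\beta=2$ and $\beta=\infty$ gives $\frac78$ instead of $\frac34$.

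The missing idea is to interpolate \emph{before} the maximum is formed. The paper first reduces to $q=2\beta$, using H\"older on $B$ if $q\le2\beta$ and $\ell^\beta\subset\ell^{q/2}$ if $q\ge2\beta$. It then splits $T_\lambda T_\lambda^*$ by $\chi_\ell(x-y)$ into shells $|x-y|\sim2^{-\ell}$, on each of which the kernel is bounded by a single constant $M_{\lambda,\ell}$. Lemma~\ref{lem:DS} interpolates between $\mathcal C^\infty$ with $L^\infty_xL^2_t$ weights and $\mathcal C^2$ with $L^4_xL^2_t$ weights, so the weight exponent moves with the Schatten exponent. This gives $M_{\lambda,\ell}2^{-\ell/\beta}$ in $\mathcal C^{\beta'}$ with $L^{2\beta'}_xL^2_t$ weights, and summing in $\ell$ produces exactly $\lambda^{\max\{\frac12,1-\frac1\beta,1-\frac{m\kappa}\beta\}+\varepsilon}$ for $1\le\beta\le2$.

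Two further problems:
\begin{enumerate}
\item Your $\mathcal C^\infty$ endpoint is misquoted. Theorem~\ref{thm:single} is an $L^2_x$ result. In $L^q_x$ the single-packet example forces $s\ge\frac12-\frac\mu q$, which exceeds $\max\{\frac14,\frac{1-m\kappa}2\}$, for instance when $m\kappa\ge1$ and $q>4$. Li--Wang also treat only $m=2$. The paper's sufficiency argument uses no single-function input at all.
\item The kernel bound you aim for, a single power $(N|x-y|)^{-\alpha}$, has the wrong shape. Inserting $|t-t'|\gtrsim\min\{|x-y|^{1/\kappa},N^{1-m}|x-y|\}$ into $(N^m|t-t'|)^{-1/2}$, as you suggest, gives (up to your factor $N^{-2s}$) the two-term bound $N(1+N|x-y|)^{-1/2}+\min\{N,N^{1-m/2}|x-y|^{-1/(2\kappa)}\}$ of Lemma~\ref{lem:ker}. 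There $\min\{m\kappa,1\}$ enters through the plateau widths $N^{-1}$ and $N^{-m\kappa}$, not through one decay exponent.
\end{enumerate}
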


For the diagonal case $q=2\beta$, write $s_{m,\kappa,\beta}:=s_{m,\kappa,2\beta,\beta}$. The case $q=2$, $\beta=1$ recovers Theorem \ref{thm:single}\,$(i)$, while the case $q=\beta=\infty$ requires $s>1/2$ and follows from Bessel's inequality. The necessary conditions for the model curve are proved in Section \ref{sec:sharp}. Figure \ref{fig:th} illustrates the diagonal case $q=2\beta$.

\begin{figure}[htbp]
\centering
\definecolor{thcol}{RGB}{31,78,145}
\begin{tikzpicture}[
  x=2.55cm, y=3.4cm,
  every node/.style={font=\small},
  axisline/.style={->,>=stealth,gray!75!black,line width=0.45pt},
  curve/.style={thcol,line width=1.15pt,line cap=round,line join=round},
  ghost/.style={blue!45,line width=0.6pt,densely dashed},
  guide/.style={gray!55,densely dotted,line width=0.4pt},
  tick/.style={gray!75!black,line width=0.45pt},
  dot/.style={circle,fill=thcol,inner sep=0pt,minimum size=2.6pt},
  panellabel/.style={font=\footnotesize,anchor=north},
  region/.style={thcol!12},
]

\begin{scope}
  \def\slope{0.18}                      
  \pgfmathsetmacro{\yend}{0.5-\slope}
  \fill[region] (0,0) -- (0,{1/2}) -- (1,\yend) -- (1,0) -- cycle;
  \draw[guide] (0,{1/4}) -- (1.02,{1/4});
  \draw[guide] (1,0) -- (1,\yend);
  \draw[axisline] (-0.03,0) -- (1.14,0);
  \node[font=\footnotesize,above,inner sep=1.5pt] at (1.10,0.005) {$1/\beta$};
  \draw[axisline] (0,-0.03) -- (0,0.62);
  \node[font=\footnotesize,right,inner sep=2pt] at (0,0.60) {$s$};
  \draw[curve] (0,{1/2}) -- (1,\yend);
  \node[dot] at (0,{1/2}) {};
  \node[dot] at (1,\yend) {};
  \node[below left,font=\footnotesize,inner sep=1.5pt] at (0,0) {$0$};
  \draw[tick] (1,0) -- (1,-0.018) node[below,font=\footnotesize] {$1$};
  \draw[tick] (0,{1/4}) -- (-0.018,{1/4}) node[left,font=\footnotesize] {$\tfrac14$};
  \draw[tick] (0,{1/2}) -- (-0.018,{1/2}) node[left,font=\footnotesize] {$\tfrac12$};
  \node[thcol,font=\scriptsize,anchor=south west,inner sep=1pt]
    at (0.22,{0.5-0.22*\slope}) {$\frac12-\frac{m\kappa}{2\beta}$};
  \node[panellabel] at (0.5,-0.13) {(a)\; $0<m\kappa\le\frac12$};
\end{scope}

\begin{scope}[xshift=3.72cm]
  \def\slope{0.35}
  \pgfmathsetmacro{\xb}{1/(4*\slope)}
  \fill[region] (0,0) -- (0,{1/2}) -- (\xb,{1/4}) -- (1,{1/4}) -- (1,0) -- cycle;
  \draw[guide] (0,{1/4}) -- (1.02,{1/4});
  \draw[guide] (\xb,0) -- (\xb,{1/4});
  \draw[ghost] (\xb,{1/4}) -- (1,{0.5-\slope});

  \draw[axisline] (-0.03,0) -- (1.14,0);
  \node[font=\footnotesize,above,inner sep=1.5pt] at (1.10,0.005) {$1/\beta$};
  \draw[axisline] (0,-0.03) -- (0,0.62);
  \node[font=\footnotesize,right,inner sep=2pt] at (0,0.60) {$s$};
  \draw[curve] (0,{1/2}) -- (\xb,{1/4}) -- (1,{1/4});
  \node[dot] at (0,{1/2}) {};
  \node[dot] at (\xb,{1/4}) {};
  \node[below left,font=\footnotesize,inner sep=1.5pt] at (0,0) {$0$};
  \draw[tick] (\xb,0) -- (\xb,-0.018) node[below,font=\scriptsize] {$\frac1{2m\kappa}$};
  \draw[tick] (1,0) -- (1,-0.018) node[below,font=\footnotesize] {$1$};
  \draw[tick] (0,{1/4}) -- (-0.018,{1/4}) node[left,font=\footnotesize] {$\tfrac14$};
  \draw[tick] (0,{1/2}) -- (-0.018,{1/2}) node[left,font=\footnotesize] {$\tfrac12$};
  \node[thcol,font=\scriptsize,anchor=south west,inner sep=1pt]
    at (0.12,{0.5-0.12*\slope}) {$\frac12-\frac{m\kappa}{2\beta}$};
  \node[thcol,font=\scriptsize,above,inner sep=2pt] at (0.86,{1/4}) {$\frac14$};
  \node[panellabel] at (0.5,-0.13) {(b)\; $\frac12<m\kappa<1$};
\end{scope}

\begin{scope}[xshift=7.44cm]
  \def\slope{0.5}
  \fill[region] (0,0) -- (0,{1/2}) -- ({1/2},{1/4}) -- (1,{1/4}) -- (1,0) -- cycle;
  \draw[guide] (0,{1/4}) -- (1.02,{1/4});
  \draw[guide] ({1/2},0) -- ({1/2},{1/4});
  \draw[ghost] ({1/2},{1/4}) -- (1,0);
  \draw[axisline] (-0.03,0) -- (1.14,0);
  \node[font=\footnotesize,above,inner sep=1.5pt] at (1.10,0.005) {$1/\beta$};
  \draw[axisline] (0,-0.03) -- (0,0.62);
  \node[font=\footnotesize,right,inner sep=2pt] at (0,0.60) {$s$};
  \draw[curve] (0,{1/2}) -- ({1/2},{1/4}) -- (1,{1/4});
  \node[dot] at (0,{1/2}) {};
  \node[dot] at ({1/2},{1/4}) {};
  \node[below left,font=\footnotesize,inner sep=1.5pt] at (0,0) {$0$};
  \draw[tick] ({1/2},0) -- ({1/2},-0.018) node[below,font=\footnotesize] {$\tfrac12$};
  \draw[tick] (1,0) -- (1,-0.018) node[below,font=\footnotesize] {$1$};
  \draw[tick] (0,{1/4}) -- (-0.018,{1/4}) node[left,font=\footnotesize] {$\tfrac14$};
  \draw[tick] (0,{1/2}) -- (-0.018,{1/2}) node[left,font=\footnotesize] {$\tfrac12$};
  \node[thcol,font=\scriptsize,anchor=south west,inner sep=1pt]
    at (0.10,{0.5-0.10*\slope}) {$\frac12\big(1-\frac1\beta\big)$};
  \node[thcol,font=\scriptsize,above,inner sep=2pt] at (0.80,{1/4}) {$\frac14$};
  \node[panellabel] at (0.5,-0.13) {(c)\; $m\kappa\ge1$};
\end{scope}

\end{tikzpicture}
\caption{The critical regularity $s_{m,\kappa,\beta}$ in the diagonal case $q=2\beta$ for $m>1$.}
\label{fig:th}
\end{figure}

When $m>1$, $m\kappa\ge1$, and $q=2\beta$, the sharp regularity  coincides with that in \eqref{e:BKS}, so the curves in $\Gamma(\kappa)$ require no additional regularity compared with the vertical line. In particular, the non-tangential model curves $\gamma_\theta^\rho$ with $\rho\ge1$ belong to $\Gamma(1)$ and the maximal estimates along these curves have the same regularity as those along the vertical line.

\medskip

\textbf{The case $0<m<1$.} For $0<\kappa\leq1$, $2\le q\le \infty$, and $1\leq\beta\leq\infty$, define
\begin{equation*}
\sigma_{m,\kappa,q,\beta}
:=\max\Big\{ \frac12-\frac m4, \ \frac12-\frac{m\kappa}q, \ \frac12-\frac{m\kappa}{2\beta}\Big\}.
\end{equation*}

\begin{thm}\label{thm:small}
Let $0<m<1$, $0<\kappa\leq1$, $2\le q \le \infty$, $1\leq\beta\leq\infty$, and
$\gamma\in\Gamma(\kappa)$. If $s>\sigma_{m,\kappa,q,\beta}$, then
\begin{equation}\label{on-small}
 \Big\|\sum_j\mu_j\big|\Sf_j\big|^2\Big\|_
 {L_x^{\frac q2}(B;L_t^\infty(I))}
 \lesssim \|\mu\|_{\ell^\beta}
\end{equation}
for every orthonormal system $(f_j)$ in $H^s(\mathbb R)$ and every sequence $\mu=(\mu_j)\in\ell^\beta(\mathbb C)$.
Moreover, the regularity condition is sharp up to endpoint.
\end{thm}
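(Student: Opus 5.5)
By duality, the estimate \eqref{on-small} reduces to a Schatten class bound. Write $T=S_m^\gamma\langle D\rangle^{-s}$. Then \eqref{on-small} for orthonormal $(f_j)$ in $H^s$ is equivalent, by the duality principle of Frank--Sabin, to
\[
\|W T T^* \overline W\|_{\mathfrak S^{\beta'}} \lesssim \|W\|^2_{L_x^{2(q/2)'}(B;L_t^2(I))}
\]
for weights $W(x,t)$ (the $L_t^\infty$ becomes $L_t^1$ on $|W|^2$). We decompose dyadically in frequency, $\widehat f$ supported where $|\xi|\sim N$, with $N=2^k$, and aim for a bound of the form
\[
\|W T_N T_N^* \overline W\|_{\mathfrak S^{\beta'}}\lesssim N^{2\sigma_{m,\kappa,q,\beta}-2s+\epsilon}\|W\|^2 .
\]
Summing over $N$ then costs only the $\epsilon$-loss, which the strict inequality $s>\sigma$ absorbs; the low frequencies are trivial.

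The dyadic bound is obtained by complex interpolation between two Schatten endpoints.

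\textbf{The $\mathfrak S^2$ endpoint.} Here $\beta'=2$. The Hilbert--Schmidt norm is $\|W(x,t)K_N(x,t,y,t')W(y,t')\|_{L^2}$, where the kernel is
\[
K_N=\int e^{i(\gamma(x,t)-\gamma(y,t'))\xi+i(t-t')|\xi|^m}\chi(\xi/N)N^{-2s}\,d\xi .
\]
We need pointwise bounds on $K_N$. Stationary phase and van der Corput give
\[
|K_N|\lesssim N^{1-2s}\bigl(1+N|\gamma(x,t)-\gamma(y,t')|\bigr)^{-M}
\]
when $|\gamma(x,t)-\gamma(y,t')|\gg N^{m-1}|t-t'|$, and the dispersive bound $N^{1-2s}(N^m|t-t'|)^{-1/2}$ otherwise. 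The bi-Lipschitz condition \eqref{lip-x} and the Hölder condition \eqref{holder-k} convert $|\gamma(x,t)-\gamma(y,t')|$ into $|x-y|$ up to an error $A_\gamma|t-t'|^\kappa$. Then Hölder/Young-type inequalities in $x,y$, with $L_t^2$ absorbing the time integrals, yield the $\frac12-\frac{m\kappa}{q}$ and $\frac12-\frac m4$ terms.

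\textbf{The $\mathfrak S^\infty$ endpoint.} This is the operator norm, i.e.\ the single-function (or $\beta=1$) estimate. It is Theorem \ref{thm:single}(ii) in its $L^q_x$ frequency-localized form. For it I would reprove the frequency-localized version via a $TT^*$ argument with the same kernel bounds, localizing time to intervals of length $N^{-1/\kappa}$; on those intervals the curve moves by at most $N^{-1}$, so the problem behaves like the vertical case. One then sums over time intervals using the dispersive decay $(N^m|t-t'|)^{-1/2}$.

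\textbf{The main obstacle.} The hard step is getting the $\frac12-\frac{m\kappa}{2\beta}$ term. It must come from the $\mathfrak S^2$ estimate, where we need the Hilbert--Schmidt bound $N^{1-m\kappa}$-type gain at $q=2\beta$. The kernel estimate requires splitting the $(t,t')$ region into three parts:
\begin{itemize}
\item $|t-t'|\le N^{-1/\kappa}$, where the curve is effectively vertical;
\item $N^{-1/\kappa}\le|t-t'|\le N^{-m}$, the non-dispersive regime, in which the $|t-t'|^\kappa$ drift of the curve spreads $x$-mass;
\item $|t-t'|\ge N^{-m}$, the dispersive regime.
\end{itemize}
In the middle region one only uses the trivial bound $N^{1-2s}$, supported where $|x-y|\lesssim|t-t'|^\kappa$; integrating this gives the factor $N^{-m\kappa}$ for $m<1$. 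Since $m<1$ here, the threshold $\min\{m\kappa,1\}$ is just $m\kappa$. I would first try Schur's test in $(x,y)$ after taking suprema in $t$ via $L^2_t$ norms of $W$. If that is lossy, I would use a further decomposition of $|x-y|$ dyadically, interpolating each piece.

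Sharpness is proved separately in Proposition \ref{prop:nece}, using the model curve $x-t^\kappa$. The counterexamples are a wave-packet system adapted to the tangential drift for the $\kappa$-terms, and the Cho--Shiraki example for $\frac12-\frac m4$.
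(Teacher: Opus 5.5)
Your kernel bounds and your Hilbert--Schmidt computation are fine. The gaps are in how you assemble the frequency-localized Schatten bound: one step loses a power of $\lambda$, and one range of $\beta$ is never reached.

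\textbf{Interpolating the whole operator is lossy.} Complex interpolation of the \emph{whole} operator $W T_\lambda T_\lambda^*\overline W$ between a $\mathcal C^\infty$ bound ($\beta=1$) and a $\mathcal C^2$ bound ($\beta=2$) cannot give the sharp exponent. In the diagonal case the target $2\sigma_{m,\kappa,\beta}=\max\{1-\frac m2,1-\frac{m\kappa}\beta\}$ is convex and piecewise linear in $1/\beta$, with a kink at $\beta=2\kappa$. When $\frac12<\kappa<1$ this kink lies strictly inside $(1,2)$, so the chord joining the endpoint exponents lies strictly above the target for every $1<\beta<2$.

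For instance, take $\beta=2\kappa$ and $q=4\kappa$, where the target is $1-\frac m2$. The weight on the $\mathcal C^\infty$ endpoint is $\theta=\frac1\kappa-1$. Whatever $q_0,q_1$ you choose, your endpoint exponents are at least $1-\frac m2$ and $1-\frac{m\kappa}2$. So you get at best $1-\frac m2+(2-\frac1\kappa)\frac m2(1-\kappa)$, a power loss that no $\varepsilon$ can absorb.

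The remedy is to interpolate \emph{before} summing over spatial scales:
\begin{enumerate}[(a)]
\item Write $K_\lambda=\sum_\ell\chi_\ell(x-y)K_\lambda$.
\item For each piece with $|K_{\lambda,\ell}|\le A_\ell\chi_\ell(x-y)$, prove the bound $A_\ell 2^{-\ell/\beta}$ in $\mathcal C^{\beta'}$. Do this by interpolating the Schur bound ($W\in L^\infty_xL^2_t$, gain $2^{-\ell}$) with the Hilbert--Schmidt bound ($W\in L^4_xL^2_t$, gain $2^{-\ell/2}$).
\item Only then sum: $\sum_\ell\min\{\lambda,\lambda^{1-\frac m2}2^{\frac{\ell}{2\kappa}}\}2^{-\frac\ell\beta}\lesssim_\varepsilon\lambda^{\max\{1-\frac m2,1-\frac{m\kappa}\beta\}+\varepsilon}$.
\end{enumerate}
This is what Lemmas~\ref{lem:DS} and~\ref{lem:sch} do. You do mention a dyadic $|x-y|$ decomposition with per-piece interpolation, but only as a fallback for the $\mathcal C^2$ step, and that is not where the difficulty is. At $\beta=2$, Young's inequality with $|K_\lambda|\lesssim\lambda(1+\lambda^m|x-y|^{1/\kappa})^{-1/2}$ gives $\lambda^{1-\frac{m\kappa}2}$ directly. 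The per-piece interpolation is needed for every $1<\beta<2$, not as an optional refinement.

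\textbf{The range $2<\beta\le\infty$ is missing.} There $\beta'<2$, which is outside the Schatten exponents between your two endpoints. You need the endpoint $\beta=\infty$, which is just Bessel's inequality: $\sum_j|S_m^\gamma P_\lambda g_j(x,t)|^2\lesssim\lambda$ pointwise. Then interpolate the linear map $\mu\mapsto\sum_j\mu_j|S_m^\gamma P_\lambda g_j|^2$ between $\ell^2\to L^2_xL^\infty_t$ (exponent $1-\frac{m\kappa}2$) and $\ell^\infty\to L^\infty_{x,t}$ (exponent $1$). This gives $\frac2\beta(1-\frac{m\kappa}2)+1-\frac2\beta=1-\frac{m\kappa}\beta=2\sigma_{m,\kappa,\beta}$. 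Here whole-operator interpolation is lossless, because the target is linear in $1/\beta$ on $[0,\frac12]$.

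\textbf{Smaller points.}
\begin{itemize}
\item It is simpler to reduce to $q=2\beta$ at the outset than to carry general $(q,\beta)$ through the duality. Use H\"older on $B$ if $q\le2\beta$, and $\ell^\beta\subset\ell^{q/2}$ if $q\ge2\beta$.
\item The $\frac12-\frac m4$ term never arises at $\beta=2$, since there $1-\frac m2\le1-\frac{m\kappa}2$. It comes from the small-$\ell$ part of the sum when $\beta<2\kappa$.
\item Your $\mathcal C^\infty$ endpoint needs no time localization. Young's inequality with the same kernel majorant already gives it.
\end{itemize}
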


For $q=2\beta$, write $\sigma_{m,\kappa,\beta}:=\sigma_{m,\kappa,2\beta,\beta}$. The case $q=2$, $\beta=1$ recovers Theorem \ref{thm:single}\,$(ii)$, while $q=\beta=\infty$ again requires $s>1/2$. Figure \ref{fig:concave} illustrates this diagonal case $q=2\beta$.

\begin{figure}[htbp]
\centering
\definecolor{thcol}{RGB}{31,78,145}
\begin{tikzpicture}[
  x=2.75cm, y=2.7cm,
  every node/.style={font=\small},
  axisline/.style={->,>=stealth,gray!75!black,line width=0.45pt},
  curve/.style={thcol,line width=1.15pt,line cap=round,line join=round},
  ghost/.style={blue!45,line width=0.6pt,densely dashed},
  guide/.style={gray!55,densely dotted,line width=0.4pt},
  tick/.style={gray!75!black,line width=0.45pt},
  dot/.style={circle,fill=thcol,inner sep=0pt,minimum size=2.6pt},
  panellabel/.style={font=\footnotesize,anchor=north},
  region/.style={thcol!12},
]

\begin{scope}
  \def\mval{0.6}
  \def\kap{0.35}
  \pgfmathsetmacro{\base}{0.5-\mval/4}
  \pgfmathsetmacro{\slope}{\mval*\kap/2}
  \pgfmathsetmacro{\yend}{0.5-\slope}
  \fill[region] (0,0) -- (0,{1/2}) -- (1,\yend) -- (1,0) -- cycle;
  \draw[ghost] (0,\base) -- (1.02,\base);
  \draw[axisline] (-0.03,0) -- (1.14,0);
  \node[font=\footnotesize,above,inner sep=1.5pt] at (1.10,0.005) {$1/\beta$};
  \draw[axisline] (0,-0.03) -- (0,0.62);
  \node[font=\footnotesize,right,inner sep=2pt] at (0,0.60) {$s$};
  \draw[curve] (0,{1/2}) -- (1,\yend);
  \node[dot] at (0,{1/2}) {};
  \node[dot] at (1,\yend) {};
  \node[below left,font=\footnotesize,inner sep=1.5pt] at (0,0) {$0$};
  \draw[tick] (1,0) -- (1,-0.018) node[below,font=\footnotesize] {$1$};
  \draw[tick] (0,{1/2}) -- (-0.018,{1/2}) node[left,font=\footnotesize] {$\tfrac12$};
  \draw[tick] (0,\base) -- (-0.018,\base)
    node[left,font=\scriptsize] {$\frac12-\frac m4$};

  \node[thcol,font=\scriptsize,anchor=south west,inner sep=1pt]
    at (0.18,{0.5-0.18*\slope}) {$\frac12-\frac{m\kappa}{2\beta}$};
  \node[panellabel] at (0.5,-0.13) {(a)\; $0<\kappa\le\frac12$};
\end{scope}

\begin{scope}[xshift=4.35cm]
  \def\mval{0.6}
  \def\kap{0.75}
  \pgfmathsetmacro{\base}{0.5-\mval/4}
  \pgfmathsetmacro{\slope}{\mval*\kap/2}
  \pgfmathsetmacro{\xb}{1/(2*\kap)}
  \pgfmathsetmacro{\yghost}{0.5-\slope}
  \fill[region] (0,0) -- (0,{1/2}) -- (\xb,\base) -- (1,\base) -- (1,0) -- cycle;
  \draw[guide] (0,\base) -- (1.02,\base);
  \draw[guide] (\xb,0) -- (\xb,\base);
  \draw[ghost] (\xb,\base) -- (1,\yghost);
  \draw[axisline] (-0.03,0) -- (1.14,0);
  \node[font=\footnotesize,above,inner sep=1.5pt] at (1.10,0.005) {$1/\beta$};
  \draw[axisline] (0,-0.03) -- (0,0.62);
  \node[font=\footnotesize,right,inner sep=2pt] at (0,0.60) {$s$};
  \draw[curve] (0,{1/2}) -- (\xb,\base) -- (1,\base);
  \node[dot] at (0,{1/2}) {};
  \node[dot] at (\xb,\base) {};
  \node[below left,font=\footnotesize,inner sep=1.5pt] at (0,0) {$0$};
  \draw[tick] (\xb,0) -- (\xb,-0.018)
    node[below,font=\scriptsize] {$\frac1{2\kappa}$};
  \draw[tick] (1,0) -- (1,-0.018) node[below,font=\footnotesize] {$1$};
  \draw[tick] (0,{1/2}) -- (-0.018,{1/2}) node[left,font=\footnotesize] {$\tfrac12$};
  \draw[tick] (0,\base) -- (-0.018,\base)
    node[left,font=\scriptsize] {$\frac12-\frac m4$};
  \node[thcol,font=\scriptsize,anchor=south west,inner sep=1pt]
    at (0.10,{0.5-0.10*\slope}) {$\frac12-\frac{m\kappa}{2\beta}$};
  \node[thcol,font=\scriptsize,above,inner sep=2pt]
    at (0.84,\base) {$\frac12-\frac m4$};
  \node[panellabel] at (0.5,-0.13) {(b)\; $\frac12<\kappa\le1$};
\end{scope}

\end{tikzpicture}
\caption{The critical regularity $\sigma_{m,\kappa,\beta}$ for $0<m<1$ in the diagonal case $q=2\beta$.}\label{fig:concave}
\end{figure}

\begin{rmk}\label{rmk:mono}
Let $0<m<1$, $\rho\ge1$, $\theta>0$, $2\le q \le \infty$, and $1\le \beta \le \infty$. Since $\gamma_\theta^\rho\in\Gamma(1)$, Theorem \ref{thm:small} gives
\eqref{on-small} along $\gamma_\theta^\rho$ whenever
\begin{equation*}
 s>\sigma_{m,1,q,\beta}
 =\max\Big\{\frac12-\frac m4,\ \frac12-\frac mq, \ \frac12-\frac{m}{2\beta}\Big\}.
\end{equation*}
This condition is sharp up to the endpoint for each $\rho\ge1$ and
$\theta>0$. See Section \ref{sec:sharp}. For $q=2\beta<\infty$, the critical regularity for the vertical line $\gamma(x,t)=x$ in \eqref{e:BKS} is strictly smaller. Thus, when $0<m<1$, maximal estimates along these non-tangential model curves require more regularity than along the vertical line.
\end{rmk}

The sums in our maximal estimates can be viewed as densities of operators. For a finite orthonormal system $(f_j)$ in $H^s(\R)$, let $A_0$ be the operator with kernel $A_0(x,y)=\sum_j\mu_j f_j(x)\overline{f_j(y)}$, and set $A(t)=e^{it(-\Delta)^{m/2}}A_0\,e^{-it(-\Delta)^{m/2}}$. Then $A(t)$ solves the operator-valued equation
\[ i\partial_t A= [A,(-\Delta)^{m/2}],\quad A(0)=A_0.\]
The initial density is given by the finite sum $\rho_{A_0}(x)=\sum_j\mu_j|f_j(x)|^2$, and the density of $A(t)$ along the curve $\gamma$ is defined by
\begin{equation}\label{density definition}
 \rho_{A(t)}(\gamma(x,t))
 =\sum_j\mu_j\big|\Sf_j(x,t)\big|^2.
\end{equation}
Estimates for such densities play a role in the study of the Hartree equation for infinitely many particles \cite{LS14,LS15}. For infinite orthonormal systems under the hypotheses below, the densities are constructed by finite rank approximation in Sobolev type Schatten classes, as explained in Section \ref{subsec:C}.

Our maximal estimates imply the following pointwise convergence of the densities.

\begin{cor}\label{cor:density}
Let $0<\kappa\leq1$, $2\le q \le \infty$, $1\leq\beta<\infty$, and $\gamma\in\Gamma(\kappa)$. Assume either $m>1$ and
$s>s_{m,\kappa,q,\beta}$, or $0<m<1$ and $s>\sigma_{m,\kappa,q,\beta}$. Then, for every orthonormal system $(f_j)$ in $H^s(\mathbb R)$ and every $\mu\in\ell^\beta(\mathbb C)$,
\begin{equation*}
\lim_{t\to 0+}  \rho_{A(t)}(\gamma(x,t))=\rho_{A_0}(x),
 \quad\text{a.e. }x\in B.
\end{equation*}
\end{cor}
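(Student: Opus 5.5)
The plan is the standard density argument, run at the level of operators. The maximal estimates of Theorems \ref{thm:max} and \ref{thm:small} supply the needed control, and a dense class of nice operators is handled directly.

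\textbf{Step 1: reformulate the maximal estimate for operators.} Write $\gamma_0 := \langle D\rangle^{s}A_0\langle D\rangle^{s}$, which lies in the Schatten class $\mathfrak S^\beta$ with $\|\gamma_0\|_{\mathfrak S^\beta}\le\|\mu\|_{\ell^\beta}$. Theorems \ref{thm:max} and \ref{thm:small} can then be read as a bound
\[
\big\|\rho_{A(t)}(\gamma(x,t))\big\|_{L^{q/2}_x(B;L^\infty_t(I))}\lesssim \|\langle D\rangle^{s}A_0\langle D\rangle^{s}\|_{\mathfrak S^\beta}
\]
for finite-rank self-adjoint $A_0$. To pass to general $A_0$, diagonalize and split into positive and negative parts, absorbing complex $\mu_j$ into signs and phases. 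Since the density is sesquilinear in the system, polarization (or the splitting $\mu_j=\mu_j^+-\mu_j^-$ applied to the real and imaginary parts of $\mu$) extends the bound to the bilinear expressions $\sum_j\mu_j\,\Sf_j\,\overline{S^\gamma_m g_j}$ that arise from differences. By Section \ref{subsec:C}, this inequality also defines $\rho_{A(t)}(\gamma(x,t))$ for infinite systems as a limit in the mixed norm.

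\textbf{Step 2: approximate.} Because $\beta<\infty$, finite-rank operators whose eigenfunctions are Schwartz functions with compactly supported Fourier transform are dense in the weighted class. Concretely, truncate $\mu$ to $|j|\le N$; then approximate each $\langle D\rangle^s f_j$ in $L^2$ by such functions and re-orthonormalize by Gram--Schmidt, which keeps the error small in $\mathfrak S^\beta$. Call the resulting operator $B_0$ and its evolution $B(t)$.

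\textbf{Step 3: the convergence argument.} For $B_0$ the convergence is immediate. The sum defining its density is finite, and each $S^\gamma_m g(x,t)\to g(x)$ uniformly on $B$, by dominated convergence in $\xi$ together with the continuity of $\gamma$ and the condition $\gamma(x,0)=x$. Hence
\[
\limsup_{t\to0}\big|\rho_{A(t)}(\gamma(x,t))-\rho_{A_0}(x)\big|\le \sup_{t\in I}\big|\rho_{A(t)-B(t)}(\gamma(x,t))\big|+\big|\rho_{A_0-B_0}(x)\big|.
\]
We then apply Chebyshev's inequality in $L^{q/2}_x$ to both terms on the right. The first is controlled by the maximal bound of Step 1 applied to $A_0-B_0$. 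The second term, the $t=0$ density, is controlled by the same inequality, since $\gamma(x,0)=x$ and $t=0\in I$. This gives
\[
\big|\{x\in B:\limsup_{t\to 0}|\cdots|>\lambda\}\big|\lesssim \lambda^{-q/2}\,\|\langle D\rangle^s(A_0-B_0)\langle D\rangle^s\|_{\mathfrak S^\beta}^{q/2},
\]
and the right side is arbitrarily small. Letting $\lambda\downarrow0$ along a sequence gives convergence almost everywhere. For $q=\infty$, replace Chebyshev by the $L^\infty$ bound directly.

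\textbf{Main obstacle.} The delicate point is Step 1: justifying that the maximal estimate, proved for orthonormal systems with a nonnegative-weight structure, applies to the difference $A_0-B_0$. This difference is neither of the same orthonormal form nor of fixed sign. I would handle it first by writing $A_0-B_0$ via its own spectral decomposition, which again gives an orthonormal system in $H^s$ with $\mathfrak S^\beta$ eigenvalue sequence, so the theorems apply verbatim. The same decomposition shows that the limiting density of Section \ref{subsec:C} is well defined and consistent.
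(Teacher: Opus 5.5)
Your argument is correct, but it is organized differently from the paper's, and that difference is what creates your ``main obstacle.''

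\textbf{The paper's route.} The paper approximates only by truncating the given expansion. The error $A_0-A_0^{(N)}=\sum_{j>N}\mu_j\langle\cdot,f_j\rangle f_j$ is still an orthonormal system in $H^s$, now with coefficients $(\mu_j\mathbf 1_{j>N})_j$. Their $\ell^\beta$ norm tends to $0$ because $\beta<\infty$, so Theorems \ref{thm:max} and \ref{thm:small} apply to the tail verbatim. Linearity of the density is automatic, since Section \ref{subsec:C} defines $\rho_{A(t)}$ as the limit of exactly these partial sums.

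The finitely many remaining terms converge a.e.\ because $S_m^\gamma f_j\to f_j$ a.e.\ for each $j$. This follows from the one-function case of the theorems: with a single coefficient, $\|\mu\|_{\ell^\beta}=1$, which gives an $L^q_x(B;L^\infty_t(I))$ bound. The classical linear density argument then finishes, and orthonormality plays no role there.

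\textbf{Your route.} Your Gram--Schmidt approximant $B_0$ makes $A_0-B_0$ an operator not of the given orthonormal form. You therefore need two extra facts.
\begin{enumerate}
\item[(a)] The maximal bound must hold for arbitrary operators in $\mathcal C^{\beta,s}$. For complex $\mu$, $Z=\langle D\rangle^s(A_0-B_0)\langle D\rangle^s$ is a difference of two normal operators with different eigenbases and need not be normal. So the direct spectral decomposition in your last paragraph is valid only after splitting $Z$ into $\frac12(Z+Z^*)$ and $\frac1{2i}(Z-Z^*)$. Each part is self-adjoint with $\mathcal C^\beta$ norm at most $\|Z\|_{\mathcal C^\beta}$. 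Polarization at the level of functions, as in your Step 1, would not preserve orthonormality.
\item[(b)] The resulting density must be linear and agree with the partial-sum definition. The cleanest way is to define the density on finite-rank operators as the representation-independent bilinear expression and prove the bound there via (a). Then extend by continuity, using that finite-rank operators are dense in $\mathcal C^{\beta}$ for $\beta<\infty$.
\end{enumerate}

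\textbf{Comparison.} In exchange for (a) and (b), your approach yields a bounded linear density map on all of $\mathcal C^{\beta,s}$ and a.e.\ convergence for arbitrary, not necessarily normal, initial operators. The paper's truncation stays inside orthonormal systems and needs neither fact.
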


For infinite sums, Corollary \ref{cor:density} follows from Theorems \ref{thm:max} and \ref{thm:small} by the standard finite rank approximation. See \cite[Section~4.3]{BLN_Selecta} and \cite[Section~6.1]{BKS} for this approximation argument.

To prove the sufficiency for the maximal estimates, we first reduce to the diagonal case $q=2\beta$. By duality, it then suffices to establish the corresponding Schatten estimates. The kernel estimates for the frequency localized $TT^*$ operator, followed by interpolation between operator norm and Hilbert--Schmidt estimates yield the range $1\leq\beta\leq2$. Interpolation with Bessel's inequality gives the remaining range  $2<\beta\leq\infty$, completing the proof in the diagonal case.

For the necessity, single-function wave packets give the conditions involving $q$, together with the known lower bounds $s\geq1/4$ for $m>1$ and $s\geq1/2-m/4$ for $0<m<1$. The restriction involving $\beta$ is obtained by constructing orthonormal systems from spatially separated translates of the wave packets.

\subsection*{Organization of the paper}
The paper is organized as follows. In Section \ref{sec:pre}, we give the definition of the Schatten spaces and associated densities, and state the duality principle. We also reduce the maximal estimates to the diagonal case $q=2\beta$, and prove a Schatten bound for integral operators. Sections \ref{sec:convex} and \ref{sec:concave} are devoted to the proof of the sufficiency parts of Theorems \ref{thm:max} and \ref{thm:small}, respectively. In Section \ref{sec:sharp}, we prove the necessary regularity conditions and establish the sharpness up to endpoints.

\subsection*{Notation}  
Throughout the paper, $A\lesssim B$ means $A\le CB$ for some constant $C>0$ that is independent of frequency scales, functions, and coefficient sequences, unless otherwise stated. In addition, we write $A\sim B$ if $A\lesssim B$ and $B\lesssim A.$

\section{Preliminaries}\label{sec:pre}
In this section, we recall the Schatten spaces, make precise the definition of the densities described in the introduction, and state the duality principle. We then reduce the problem to the diagonal case $q=2\beta$ and collect some Schatten bounds for integral operators used in the proofs.

\subsection{Schatten spaces $\mathcal C^p$}\label{subsec:C}
For $1\le p<\infty$, the Schatten class $\mathcal C^p=\mathcal C^p(L^2(\R^d))$ consists of compact operators $A$ on $L^2(\R^d)$ whose singular values $(s_j(A))$ belong to $\ell^p$. Here, the singular values are the eigenvalues of $(A^*A)^{1/2}$. The norm is $\|A\|_{\mathcal C^p}=\|(s_j(A))\|_{\ell^p}$. We also set $\mathcal C^\infty=\mathcal B(L^2(\R^d))$, the space of bounded operators with the operator norm. The class $\mathcal C^2$ is the Hilbert--Schmidt class. In particular, an integral operator $A$ on $L^2(\R^d)$ with kernel $K\in L^2(\R^d\times \R^d)$ satisfies $\|A\|_{\mathcal C^2}=\|K\|_{L^2(\R^d\times \R^d)}$. 

For $1\le p \le\infty$ and $s\in\R$, we define the Sobolev-type Schatten class $\mathcal C^{p,s}$ as the space of bounded operators $A:H^{-s}(\R^d)\to H^s(\R^d)$ satisfying
\[
\|A\|_{\mathcal C^{p,s}}
:=\big\|\langle D\rangle^s A\langle D\rangle^s\big\|_{\mathcal C^p(L^2(\R^d))}
<\infty .
\]
We refer to \cite{Simon} for further properties of Schatten spaces.

Let $1\le\beta<\infty$, let $(f_j)$ be an orthonormal system in $H^s(\R)$, and let $\mu=(\mu_j)\in\ell^\beta$. Then the operator $A_0:H^{-s}(\R) \to H^s(\R)$ defined by
\[
 A_0h=\sum_j\mu_j\langle h,f_j\rangle f_j
\]
belongs to $\mathcal C^{\beta,s}$ with $\|A_0\|_{\mathcal C^{\beta,s}}=\|\mu\|_{\ell^\beta}$, since $(\langle D\rangle^sf_j)$ is orthonormal in $L^2(\R)$. We set $A(t)=U_m(t)A_0U_m(t)^*$ with $U_m(t)=e^{it(-\Delta)^{m/2}}$. For finite sums, the densities are given by \eqref{density definition}. In general, if $s$ is as in Theorem~\ref{thm:max} or Theorem~\ref{thm:small}, these theorems show that the partial sums of $\sum_j\mu_j|\Sf_j|^2$ converge in $L^{q/2}_x(B;L^\infty_t(I))$, and we define $\rho_{A(t)}(\gamma(x,t))$ as the limit. Similarly, $\rho_{A_0}$ is defined by the limit of $\sum_{j=1}^N \mu_j|f_j|^2$ in $L^{q/2}(B)$. Corollary~\ref{cor:density} then follows from this estimate by the standard approximation argument. See \cite[Section~4.3]{BLN_Selecta} and \cite[Section~6.1]{BKS}.

\subsection{Duality}

The following duality principle is due to Frank and Sabin \cite[Lemma 3]{FS_AJM} (see also \cite[Proposition 2.1]{BHLNS} for the mixed-norm formulation).

\begin{lem}\label{lem:dual}
Let $1\leq\beta\leq\infty$ and let
$T:L^2(\mathbb R)\to L_x^{2\beta}(B;L_t^\infty(I))$ be a bounded
linear operator.  The estimate
\begin{equation}\label{dual:beta}
 \Big\|\sum_j\mu_j|Tg_j|^2\Big\|_{L_x^\beta L_t^\infty}
 \leq A\|\mu\|_{\ell^\beta}
\end{equation}
for every orthonormal system $(g_j)\subset L^2(\mathbb R)$ and every $\mu=(\mu_j)\in\ell^\beta$ is equivalent, with the same optimal constant, to
\begin{equation*}
 \|WTT^*\overline W\|_{\mathcal C^{\beta'}}
 \leq A\|W\|_{L_x^{2\beta'}L_t^2}^2
\end{equation*}
for every $W\in L_x^{2\beta'}(B;L_t^2(I))$. Here $\beta'$ is the  H\"older conjugate of $\beta$.
\end{lem}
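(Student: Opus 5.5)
This is the Frank--Sabin duality principle in mixed-norm form, and I would prove it with the standard Schatten duality argument. Set $\gamma=\beta'$ for the dual exponent. For a finite orthonormal system $(g_j)$ and finitely supported $\mu$, let $\Lambda=\sum_j\mu_j|g_j\rangle\langle g_j|$, so that $\|\Lambda\|_{\mathcal C^\beta}=\|\mu\|_{\ell^\beta}$ and the density of $T\Lambda T^*$ is $\rho(x,t)=\sum_j\mu_j|Tg_j(x,t)|^2$. The key identity pairs this density with a weight $V(x,t)$:
\[
\int\!\!\int V\rho\,dt\,dx=\operatorname{Tr}\big(V T\Lambda T^*\big)=\operatorname{Tr}\big(\Lambda\, T^*VT\big).
\]
Taking $V=|W|^2$ gives $T^*|W|^2T=(\overline W T)^*(\overline W T)$, and $\|B^*B\|_{\mathcal C^{\beta'}}=\|BB^*\|_{\mathcal C^{\beta'}}$ with $BB^*=\overline W TT^*W$. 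Swapping $W$ and $\overline W$ is harmless, because $\overline W$ ranges over the same space.

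\textbf{Schatten $\Rightarrow$ maximal.} The norm $L^\beta_xL^\infty_t$ is dual to $L^{\beta'}_xL^1_t$. For $\mu_j\ge0$, the density $\rho$ is nonnegative, so
\[
\|\rho\|_{L^\beta_xL^\infty_t}=\sup\Big\{\int V\rho \;:\; V\ge0,\ \|V\|_{L^{\beta'}_xL^1_t}\le1\Big\}.
\]
Write $V=|W|^2$, so that $\|V\|_{L^{\beta'}_xL^1_t}=\|W\|^2_{L^{2\beta'}_xL^2_t}$. Hölder's inequality for Schatten norms then gives
\[
\int V\rho=\operatorname{Tr}(\Lambda T^*|W|^2T)\le\|\mu\|_{\ell^\beta}\|\overline W TT^*W\|_{\mathcal C^{\beta'}}\le A\|\mu\|_{\ell^\beta}.
\]
For complex $\mu$, apply this to $|\mu_j|$ and use $|\sum_j\mu_j|Tg_j|^2|\le\sum_j|\mu_j||Tg_j|^2$.

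\textbf{Maximal $\Rightarrow$ Schatten.} Since $\overline WTT^*W$ is a positive operator, its $\mathcal C^{\beta'}$ norm equals $\sup\operatorname{Tr}(\Lambda\,\overline WTT^*W)$ over finite-rank $\Lambda\ge0$ with $\|\Lambda\|_{\mathcal C^\beta}\le1$. Such $\Lambda$ can be written $\sum_j\mu_j|h_j\rangle\langle h_j|$ with $h_j$ orthonormal and $\mu_j\ge0$. The trace becomes $\operatorname{Tr}(\Lambda' T^*|W|^2T)$ for the operator $\Lambda'=T^*W\Lambda\overline W T$ on the appropriate side. It is cleaner to use the $B^*B$ form: $\|BB^*\|_{\mathcal C^{\beta'}}=\|B^*B\|_{\mathcal C^{\beta'}}$ with $B^*B=T^*|W|^2T$, an operator on $L^2(\R)$. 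Then
\[
\operatorname{Tr}(\Lambda T^*|W|^2T)=\int|W|^2\rho_\Lambda\le\|W\|^2_{L^{2\beta'}_xL^2_t}\|\rho_\Lambda\|_{L^\beta_xL^\infty_t}\le A\|W\|^2_{L^{2\beta'}_xL^2_t}\|\mu\|_{\ell^\beta},
\]
using Hölder in $t$ and then in $x$.

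\textbf{Technical points.} The main obstacle is purely technical: the operators must be justified as bounded and the traces as well defined. The boundedness hypothesis on $T$ makes $\overline WT$ bounded from $L^2$ to $L^2_{x,t}$, since $\|WF\|_{L^2}\le\|W\|_{L^{2\beta'}_xL^2_t}\|F\|_{L^{2\beta}_xL^\infty_t}$, so all traces are finite for finite-rank $\Lambda$. General $\mu\in\ell^\beta$ and infinite systems are then handled by monotone convergence. The endpoints $\beta=1$ and $\beta=\infty$ are covered by the same argument, with $\mathcal C^\infty$ meaning the operator norm. Equality of the optimal constants follows because each direction yields the other's constant.
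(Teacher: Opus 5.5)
Your proof is correct. It is the standard Schatten-duality argument of Frank--Sabin \cite[Lemma 3]{FS_AJM}: pair the density with $|W|^2$ through $\operatorname{Tr}(\Lambda T^*|W|^2T)$, use $\|B^*B\|_{\mathcal C^{\beta'}}=\|BB^*\|_{\mathcal C^{\beta'}}$ for $B=\overline W T$, and apply H\"older in the Schatten and mixed norms. The paper gives no proof of its own and simply cites that argument together with \cite[Proposition 2.1]{BHLNS}.

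The one step that deserves an extra line is the characterization of $\|X\|_{\mathcal C^{\beta'}}$, for a positive bounded $X$, as the supremum of $\operatorname{Tr}(\Lambda X)$ over finite-rank $\Lambda\ge0$ with $\|\Lambda\|_{\mathcal C^\beta}\le1$. For $\beta'<\infty$, a finite supremum also forces $X$ to be compact: test against normalized finite-rank pieces of the spectral projection of $X$ onto $[\delta,\infty)$.
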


As an application of the duality (Lemma \ref{lem:dual}), we state the following lemma, whose proof can be found in \cite{JKS}. This lemma will be used to compose $T$ with frequency-localized multipliers.

\begin{lem}[{\cite[Lemma 2.3]{JKS}}]\label{lem:TB-T1}
Let $1\leq\beta\leq\infty$, let $T$ be as in Lemma~\ref{lem:dual},
and suppose that \eqref{dual:beta} holds.  If
$Q\in\mathcal B(L^2(\mathbb R))$, then
\begin{equation*}
 \Big\|\sum_j\mu_j|TQg_j|^2\Big\|_{L_x^\beta L_t^\infty}
 \leq A\|Q\|_{L^2\to L^2}^2\|\mu\|_{\ell^\beta}
\end{equation*}
for every orthonormal system $(g_j)\subset L^2(\mathbb R)$ and every $\mu=(\mu_j)\in\ell^\beta$.
\end{lem}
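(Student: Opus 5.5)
The plan is to go through the dual Schatten formulation of Lemma~\ref{lem:dual} and use the triangle inequality in the Schatten class $\mathcal C^{\beta'}$.

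By Lemma~\ref{lem:dual}, the hypothesis \eqref{dual:beta} is equivalent to
\[
\|WTT^*\overline W\|_{\mathcal C^{\beta'}}\le A\|W\|_{L^{2\beta'}_xL^2_t}^2
\quad\text{for every } W\in L_x^{2\beta'}(B;L_t^2(I)).
\]
For the conclusion, $TQ$ is again a bounded operator from $L^2(\R)$ to $L^{2\beta}_xL^\infty_t$, so the same lemma applies to it. It therefore suffices to show
\[
\|W(TQ)(TQ)^*\overline W\|_{\mathcal C^{\beta'}}
\le A\|Q\|^2\|W\|_{L^{2\beta'}_xL^2_t}^2 .
\]

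The key step is the operator inequality
\[
(TQ)(TQ)^*=TQQ^*T^*\le \|Q\|^2\,TT^*,
\]
which holds because $QQ^*\le\|Q\|^2 I$ in the positive-operator order. Conjugating by $W$ preserves this ordering, since $\overline W$ acts as the adjoint of multiplication by $W$. This gives
\[
0\le W\,TQQ^*T^*\,\overline W\le \|Q\|^2\, W\,TT^*\,\overline W .
\]

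Monotonicity of Schatten norms on positive operators then finishes the argument. If $0\le X\le Y$, then $s_j(X)\le s_j(Y)$ by the min--max principle, so $\|X\|_{\mathcal C^p}\le \|Y\|_{\mathcal C^p}$. Applied here, this yields the required dual bound, and Lemma~\ref{lem:dual} converts it back into the stated orthonormal estimate.

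An alternative direct route avoids duality. Write $Qg_j$ through the positive operator $Q\Gamma Q^*$ with $\Gamma=\sum_j\mu_j g_j\langle\cdot,g_j\rangle$, which reduces to nonnegative $\mu$. Diagonalizing $Q\Gamma Q^*$ gives an orthonormal system $(h_k)$ and eigenvalues $\nu_k$ with $\|\nu\|_{\ell^\beta}\le\|Q\|^2\|\mu\|_{\ell^\beta}$. Then one shows $\sum_j\mu_j|TQg_j|^2=\sum_k\nu_k|Th_k|^2$ pointwise. This route requires care with making sense of pointwise values and with complex $\mu$; for complex $\mu$ one would split $\mu$ into real and imaginary, positive and negative parts, at the cost of a constant.

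The duality route is cleaner because Lemma~\ref{lem:dual} holds with the same constant. The main obstacle is justifying the ordering under conjugation by $W$ when $W$ is an unbounded weight. I would handle this by first taking $W$ bounded and compactly supported, where all operators are bounded, and then concluding by density in $L^{2\beta'}_xL^2_t$ together with lower semicontinuity of the $\mathcal C^{\beta'}$ norm.
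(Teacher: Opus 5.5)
Your duality argument is correct and follows the route the paper indicates (the lemma is presented as an application of Lemma~\ref{lem:dual}, with the proof deferred to \cite{JKS}). Your ordering-plus-min--max step is equivalent to the one-line ideal bound $\|WTQ\|_{\mathcal C^{2\beta'}}\le\|WT\|_{\mathcal C^{2\beta'}}\|Q\|_{L^2\to L^2}$.

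The approximation step for unbounded $W$ is unnecessary. By H\"older's inequality, $WT$ is already bounded from $L^2(\R)$ to $L^2(B\times I)$, so $WTT^*\overline W=(WT)(WT)^*$, and the inequality $(WT)QQ^*(WT)^*\le\|Q\|^2(WT)(WT)^*$ holds directly.
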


\subsection{Reduction to the diagonal case}\label{subsec:diag}
Even though the estimates \eqref{max-main} and \eqref{on-small} involve two parameters $q$ and $\beta$, it suffices to prove them when $q=2\beta$. We only consider \eqref{max-main}, since \eqref{on-small} is similar.  Note that 
\[
s_{m,\kappa,q,\beta}=\max\{s_{m,\kappa,q/2},s_{m,\kappa,\beta}\}.
\]
When $q\le2\beta$, H\"older's inequality gives
\[
\Big\|\sum_j\mu_j|\Sf_j|^2\Big\|_
{L_x^{\frac q2}(B;L_t^\infty(I))}
\lesssim \Big\|\sum_j\mu_j|\Sf_j|^2\Big\|_
{L_x^{\beta}(B;L_t^\infty(I))} \lesssim\|\mu\|_{\ell^\beta}
\]
for $s>s_{m,\kappa,\beta}=s_{m,\kappa,q,\beta}$. When $q\ge2\beta$, the diagonal estimate with $\beta$ replaced by $q/2$ and the embedding $\ell^\beta\subset\ell^{q/2}$ give
\[
\Big\|\sum_j\mu_j|\Sf_j|^2\Big\|_
 {L_x^{\frac q2}(B;L_t^\infty(I))}
 \lesssim\|\mu\|_{\ell^{\frac q2}}\le\|\mu\|_{\ell^\beta}
\]
for $s>s_{m,\kappa,q/2}=s_{m,\kappa,q,\beta}$. Therefore, we assume $q=2\beta$ in Sections~\ref{sec:convex} and \ref{sec:concave}.

\subsection{Schatten bound for integral operators}\label{sec:24}
The following Schatten bound is to be used in both convex $m>1$ and concave $0<m<1$ cases.
Let $\chi_*\in C_0^\infty(\R)$ be even with $0\le\chi_*\le1$, $\chi_*(z)=1$ for $|z|\le2$, nonincreasing as a function of $|z|$, and $\operatorname{supp}\chi_*\subset[-4,4]$. Set $\chi(z)=\chi_*(z)-\chi_*(2z)$ and $\chi_\ell(z)=\chi(2^\ell z)$ for $\ell\ge0$. Then $\chi_\ell\ge0$ and $\sum_{\ell\ge0}\chi_\ell(z)=1$ for $0<|z|<2$.

\begin{lem}\label{lem:DS}
Let $\ell\geq0$.  Suppose that
$\mathcal T_{\ell}$ is an integral operator on $L^2(B\times I)$ whose
kernel $K_\ell$ satisfies
\begin{equation*}
 |K_\ell(x,y,t,t')|
 \leq A_\ell \,\chi_\ell(x-y)
\end{equation*}
for some $A_\ell>0$.  Then, for $2\le q\le\infty$ and
$W_1,W_2\in L_x^{2q}(B;L_t^2(I))$,
\begin{equation}\label{e:Sdyadic}
\big\|W_1\mathcal T_\ell\overline{W_2}\big\|_{\mathcal C^q}
\lesssim
A_\ell2^{-\ell(1-\frac 1q)}
\|W_1\|_{L_x^{2q}(B;L_t^2(I))}
\|W_2\|_{L_x^{2q}(B;L_t^2(I))}.
\end{equation}
\end{lem}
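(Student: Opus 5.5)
The plan is to interpolate between the endpoints $q=2$ (Hilbert--Schmidt) and $q=\infty$ (operator norm), but with $W_1,W_2$ entering in a way compatible with complex interpolation. Normalize $\|W_i\|_{L_x^{2q}L_t^2}=1$ and write $W_i=|W_i|e^{i\arg W_i}$. Consider the analytic family
\[
z\mapsto F_z:=\big(|W_1|^{q z}e^{i\arg W_1}\big)\,\mathcal T_\ell\,\big(|W_2|^{qz}e^{-i\arg W_2}\big),\qquad 0\le \operatorname{Re} z\le \tfrac12 .
\]
At $z=1/q$ this is $W_1\mathcal T_\ell\overline{W_2}$. Since $1/q=(1-\theta)\cdot 0+\theta\cdot\frac12$ with $\theta=2/q$, Stein interpolation for Schatten classes gives the $\mathcal C^q$ bound once we have $\mathcal C^\infty$ bounds on $\operatorname{Re}z=0$ and $\mathcal C^2$ bounds on $\operatorname{Re}z=1/2$. (It may be cleaner to split the weights by the norm $L_x^{2q}L_t^2$, i.e.\ use $|W_i(x,t)|\,\|W_i(x,\cdot)\|_{L^2_t}^{qz-1}$, so that the $t$-profile is kept and only the $x$-profile is raised to a power.) I would use this refined version, since the kernel bound only depends on $x-y$.

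\emph{Hilbert--Schmidt endpoint.} With weights $V_i$ satisfying $\|V_i\|_{L^4_xL^2_t}\le1$, the kernel of $V_1\mathcal T_\ell\overline{V_2}$ is bounded by $A_\ell |V_1(x,t)|\chi_\ell(x-y)|V_2(y,t')|$, so its $L^2$ norm squared is at most
\[
A_\ell^2\iint \|V_1(x,\cdot)\|_{L^2_t}^2\,\chi_\ell(x-y)^2\,\|V_2(y,\cdot)\|_{L^2_t}^2\,dx\,dy\lesssim A_\ell^2\,2^{-\ell}\,\|V_1\|_{L^4_xL^2_t}^2\|V_2\|_{L^4_xL^2_t}^2,
\]
by Cauchy--Schwarz (or Young) in $x,y$, using $\|\chi_\ell\|_{L^1}\sim 2^{-\ell}$. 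This gives the factor $2^{-\ell/2}=2^{-\ell(1-\frac12)}$.

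\emph{Operator endpoint.} With weights satisfying $\|V_i\|_{L^\infty_xL^2_t}\le1$, Cauchy--Schwarz in $t'$ gives $|\int K_\ell V_2 g\,dy\,dt'|\le A_\ell\int\chi_\ell(x-y)\|V_2(y,\cdot)\|_{L^2_t}\|g(y,\cdot)\|_{L^2_t}dy$. Multiplying by $V_1$ and taking $L^2_{x,t}$, Young's inequality in $x$ (again $\|\chi_\ell\|_{L^1}\sim2^{-\ell}$) yields $\|V_1\mathcal T_\ell\overline{V_2}\|_{\mathcal B}\lesssim A_\ell2^{-\ell}$. Interpolating, $2^{-\ell(1-\theta)}2^{-\ell\theta/2}=2^{-\ell(1-1/q)}$, as required.

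The main obstacle is the bookkeeping in the analytic family: checking that the $x$-power modification maps the unit ball of $L^{2q}_xL^2_t$ to the unit balls of $L^\infty_xL^2_t$ and $L^4_xL^2_t$ on the two boundary lines (exponents $qz$ with $\operatorname{Re}$ equal to $0$ and $q/4\cdot$ appropriate normalization; I would parametrize so that $\operatorname{Re}z=1/2$ gives $|w|^{q/2}$ whose $L^4$ norm is $1$), and that the family is admissible (bounded growth), handled by first taking simple functions $\|W_i(x,\cdot)\|_{L^2_t}$.
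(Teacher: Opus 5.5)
Your proof is correct and follows the paper's argument. The ingredients match: the $\mathcal C^\infty$ bound $A_\ell 2^{-\ell}$ via Cauchy--Schwarz in time and Young's inequality, the Hilbert--Schmidt bound $A_\ell 2^{-\ell/2}$ with $L^4_xL^2_t$ weights, and complex interpolation with $\theta=2/q$. The refined analytic family $W_i(x,t)\|W_i(x,\cdot)\|_{L^2_t}^{qz-1}$ you settle on is the right one to make the paper's brief appeal to bilinear complex interpolation precise for the mixed norm; the naive family $|W_i|^{qz}$ would instead produce $L^{2q}_xL^q_t$ norms.
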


\begin{proof}
We first prove the estimate when $q=\infty$ by duality.  For $F,G\in L^2(B\times I)$, the kernel bound and the
Cauchy--Schwarz inequality in the time variables give
\begin{align*}
|\langle W_1\mathcal T_\ell\overline{W_2}F,G\rangle|
&\lesssim A_\ell
\|W_1\|_{L_x^\infty L_t^2}
\|W_2\|_{L_x^\infty L_t^2}\\
&\quad\times
\iint_{B\times B}\chi_\ell(x-y)
\|F(y,\cdot)\|_{L_t^2(I)}
\|G(x,\cdot)\|_{L_t^2(I)}\,dx\,dy.
\end{align*}
Since $\|\chi_\ell\|_{L^1}\lesssim2^{-\ell}$, Young's
convolution inequality yields
\begin{equation*}
|\langle W_1\mathcal T_\ell\overline{W_2}F,G\rangle|
\lesssim A_\ell2^{-\ell}
\|W_1\|_{L_x^\infty L_t^2}
\|W_2\|_{L_x^\infty L_t^2}\|F\|_{L^2_{x,t}}\|G\|_{L^2_{x,t}}.
\end{equation*}
This and duality imply
\begin{equation*}
\|W_1\mathcal T_\ell\overline{W_2}\|_{L^2\to L^2}
\lesssim A_\ell2^{-\ell}
\|W_1\|_{L_x^\infty L_t^2}
\|W_2\|_{L_x^\infty L_t^2},
\end{equation*}
which is \eqref{e:Sdyadic} for $q=\infty$.

Now we prove \eqref{e:Sdyadic} when $q=2$. The upper bound for the kernel and  Young's convolution inequality give
\begin{align*}
\|W_1\mathcal T_\ell\overline{W_2}\|_{\mathcal C^2}^2
&\lesssim A_\ell^2
\iint_{B\times B}\chi_\ell^2(x-y)
\|W_1(x,\cdot)\|_{L_t^2(I)}^2
\|W_2(y,\cdot)\|_{L_t^2(I)}^2\,dx\,dy\\
&\lesssim A_\ell^2 2^{-\ell}
\|W_1\|_{L_x^4L_t^2}^2
\|W_2\|_{L_x^4L_t^2}^2.
\end{align*}
Consequently, we have
\begin{equation*}
\|W_1\mathcal T_\ell\overline{W_2}\|_{\mathcal C^2}
\lesssim A_\ell2^{-\ell/2}
\|W_1\|_{L_x^4L_t^2}
\|W_2\|_{L_x^4L_t^2}.
\end{equation*}
For $q\in (2,\infty)$, the bilinear complex interpolation between the $\mathcal C^\infty$ and $\mathcal C^2$ estimates proves the desired result. For the complex interpolation between Schatten spaces, we refer to \cite[Theorem~2.9 and Remark~1, p.~23]{Simon}.
\end{proof}

\section{Convex case: $m>1$}\label{sec:convex}
Throughout this section, we fix $m>1$, $0<\kappa\leq1$ and
$\gamma\in\Gamma(\kappa)$, and prove the sufficiency part of Theorem \ref{thm:max}. 

\subsection{Kernel estimates}
We choose real-valued functions $\psi_0\in C_0^\infty((-2,2))$ and
$\psi\in C_0^\infty((-2,-1/2)\cup(1/2,2))$ so that $\psi_0+\sum_{k\ge1}\psi(2^{-k}\cdot)=1$ on $\R$. For each dyadic $\lambda\geq2$, define the Littlewood--Paley projection $P_\lambda$ and the low-frequency projection $P_{0}$ by $ \widehat{P_\lambda f}=\psi(\lambda^{-1}\cdot)\widehat f$ and
 $\widehat{P_{0}f}=\psi_0\widehat f$, respectively. For each dyadic $\lambda\ge2$, set 
\begin{align}\label{Tlambda}
T_{\lambda}f(x,t):= S_m^\gamma P_\lambda f(x,t).
\end{align}
Let $K_\lambda$ be the kernel of $T_{\lambda}T_{\lambda}^*$. Then
\begin{align}\label{Ker-exp}
K_\lambda(x,y,t,t')=(2\pi)^{-1}\int_\mathbb R e^{i\Phi(x,y,t,t';\xi)}\psi(\xi/\lambda)^2\,d\xi
\end{align}
where the phase function is given by
\begin{align*}
\Phi(x,y,t,t';\xi)=\big(\gamma(x,t)-\gamma(y,t')\big)\xi
+(t-t')|\xi|^m.
\end{align*}

The next lemma is an analogue of Lemma 3.3 of Cho--Lee--Vargas~\cite{CLV}. For completeness, we include the proof.

\begin{lem}\label{lem:ker}
Let $m>1$, $0<\kappa\leq1$, and $\gamma\in\Gamma(\kappa)$. For every dyadic $\lambda\ge2$ and $(x,t),(y,t')\in B\times I$,
\begin{equation}\label{e:kernel1}
|K_\lambda(x,y,t,t')|
\lesssim 
\lambda(1+\lambda|x-y|)^{-1/2}
+\min\big\{\lambda,\lambda^{1-m/2}|x-y|^{-1/(2\kappa)}\big\}.
\end{equation}
\end{lem}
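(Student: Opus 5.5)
The plan is to rescale, split according to the size of the time difference, and use stationary phase in the rescaled frequency variable. Substitute $\xi=\lambda\eta$ in \eqref{Ker-exp} and write $a=\gamma(x,t)-\gamma(y,t')$ and $\tau=t-t'$. Then
\[
K_\lambda=(2\pi)^{-1}\lambda\int e^{i\lambda(a\eta+\lambda^{m-1}\tau|\eta|^m)}\psi(\eta)^2\,d\eta .
\]
The trivial bound $|K_\lambda|\lesssim\lambda$ is immediate, so throughout we may assume $\lambda|x-y|\ge1$. The phase derivative is $\lambda(a+m\lambda^{m-1}\tau|\eta|^{m-2}\eta)$, and its second derivative has size $\sim\lambda^m|\tau|$ on $\operatorname{supp}\psi$, because $m>1$ and $|\eta|\sim1$ there. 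Van der Corput's lemma with the second derivative therefore gives, uniformly in $a$,
\begin{equation}\label{vdc2}
|K_\lambda|\lesssim\lambda(\lambda^m|\tau|)^{-1/2}=\lambda^{1-m/2}|\tau|^{-1/2}.
\end{equation}

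We split into two cases, comparing $|\tau|^\kappa$ with $|x-y|$.

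\emph{Case 1: $A_\gamma|\tau|^\kappa\le (2B_\gamma)^{-1}|x-y|$.} By \eqref{lip-x} and \eqref{holder-k},
\[
|a|\ge|\gamma(x,t)-\gamma(y,t)|-|\gamma(y,t)-\gamma(y,t')|\ge (2B_\gamma)^{-1}|x-y| .
\]
Two sub-cases follow.
\begin{itemize}
\item If $\lambda^{m-1}|\tau|\le c|a|$ for a small constant $c$, the phase has no critical point and $|\partial_\eta\text{phase}|\gtrsim\lambda|a|$. Integrating by parts (with $\partial_\eta^2$ of the phase $\lesssim\lambda|a|$ and the amplitude smooth) gives $|K_\lambda|\lesssim\lambda(\lambda|a|)^{-N}\lesssim\lambda(1+\lambda|x-y|)^{-1/2}$.
\item Otherwise $\lambda^m|\tau|\gtrsim\lambda|a|\gtrsim\lambda|x-y|$, and \eqref{vdc2} gives $|K_\lambda|\lesssim\lambda(\lambda|x-y|)^{-1/2}$.
\end{itemize}
In both sub-cases the first term on the right of \eqref{e:kernel1} dominates.

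\emph{Case 2: $|\tau|^\kappa\gtrsim|x-y|$, that is, $|\tau|\gtrsim|x-y|^{1/\kappa}$.} Here \eqref{vdc2} directly yields
\[
|K_\lambda|\lesssim\lambda^{1-m/2}|x-y|^{-1/(2\kappa)} .
\]
Combined with the trivial bound $\lambda$, this gives the $\min$ term in \eqref{e:kernel1}.

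The main point needing care is the non-stationary sub-case of Case 1. The integration by parts must use only that $\gamma$ is continuous. This is fine because $\gamma$ enters the phase solely through the constant $a$, so no regularity of $\gamma$ in $t$ beyond \eqref{holder-k} is needed. The second-derivative lower bound also requires $m\ne1$ and the support of $\psi$ to avoid the origin, and both hold here.
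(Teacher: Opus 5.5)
Your proposal is correct and follows essentially the same route as the paper. You rescale, split according to whether $|x-y|$ dominates $|t-t'|^\kappa$, and in the first case separate the non-stationary regime (integration by parts) from the regime $\lambda^m|t-t'|\gtrsim\lambda|x-y|$ (van der Corput with the second derivative), then apply van der Corput directly in the second case. The only differences are cosmetic: you compare $\lambda^{m-1}|\tau|$ with $|a|$ rather than with $|x-y|$, and you make the constants $A_\gamma,B_\gamma$ explicit.
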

\begin{proof}
By the change of variables $\xi \rightarrow \lambda\xi$, we write
\begin{equation}\label{Kc}
 K_\lambda(x,y,t,t')
 =\frac{\lambda}{2\pi}\int_{\mathbb R}e^{i\Phi_\lambda(x,y,t,t';\xi)}\psi(\xi)^2\,d\xi,
\end{equation}
where $\Phi_\lambda(x,y,t,t';\xi)=\lambda\big(\gamma(x,t)-\gamma(y,t')\big)\xi +\lambda^m\big(t-t'\big)|\xi|^m$. Clearly, $|K_\lambda|\lesssim \lambda.$ To obtain  smaller bounds for $K_\lambda$, we observe that
\[\gamma(x,t)-\gamma(y,t')=\big(\gamma(x,t')-\gamma(y,t')\big)+\big(\gamma(x,t)-\gamma(x,t')\big),
\]
where the absolute value of the first term is comparable to $|x-y|$, and the second term is $O(|t-t'|^\kappa)$. We now  split into two cases $ |x-y|\ge C_0 |t-t'|^\kappa$ and $ |x-y|\le C_0 |t-t'|^\kappa$ with a sufficiently large constant $C_0>0$.  

We first assume that $ |x-y|\ge C_0|t-t'|^\kappa$ holds. Then $|\gamma(x,t)-\gamma(y,t')|\sim |x-y|$.  In addition, if
$|x-y|\ge C_1\lambda^{m-1}|t-t'|$ with a large constant $C_1>0$,  then
\[|\partial_\xi \Phi_\lambda|\ge \lambda |\gamma(x,t)-\gamma(y,t')|-
\lambda^m m|\xi|^{m-1}|t-t'|
 \gtrsim \lambda|x-y| \]
and  $|\partial_\xi^\alpha \Phi_\lambda|\lesssim \lambda|x-y|+\lambda^m|t-t'| \lesssim \lambda |x-y|$ for all $\alpha\ge1$ on $\operatorname{supp}  \psi$. Thus, repeated integration by parts gives
\begin{equation}\label{ibp}
|K_\lambda(x,y,t,t')| \lesssim \lambda (1+\lambda|x-y|)^{-N}    
\end{equation}
for any $N\ge1$. 
In the complementary region $|x-y|\le C_1\lambda^{m-1}|t-t'|$, a positive lower bound for $|\partial_\xi\Phi_\lambda|$ is no longer guaranteed. However, $|\partial^2_\xi\Phi_\lambda|\gtrsim \lambda^m|t-t'|\gtrsim \lambda|x-y|$ on $\operatorname{supp} \psi$. So,  van der Corput's lemma  gives 
\begin{equation*}
 |K_\lambda(x,y,t,t')|  \lesssim \lambda(1+\lambda|x-y|)^{-1/2},
\end{equation*}
which, together with \eqref{ibp}, gives the first term on the right-hand side of \eqref{e:kernel1}.

We next consider the case  $|x-y|\le C_0|t-t'|^\kappa$. Note that $|\partial_{\xi}^2 \Phi_\lambda|\sim \lambda^m|t-t'|\gtrsim \lambda^m|x-y|^{1/\kappa}$ on $\operatorname{supp}\psi$. Thus van der Corput's lemma gives
\begin{equation*}
 |K_\lambda(x,y,t,t')|
 \lesssim\lambda \big(1+\lambda^{m}|x-y|^{1/\kappa}\big)^{-1/2}.
\end{equation*}
This proves \eqref{e:kernel1}.
\end{proof}

\subsection{Spatial decomposition}\label{sub:dyadic}
Note that
\begin{equation}\label{2s-max}
2s_{m,\kappa,\beta}
=\max\Big\{
\frac12, \ 1-\frac1\beta, \ 1-\frac{m\kappa}{\beta}
\Big\}.
\end{equation}
Recall the cutoff functions $\chi_\ell$ defined in Section \ref{sec:24}. For $t,t'\in I$, define
\[
K_{\lambda,\ell}(x,y,t,t')
=
\chi_\ell(x-y)K_\lambda(x,y,t,t').
\]
By Lemma~\ref{lem:ker},
\begin{equation}\label{ker-2}
|K_{\lambda,\ell}(x,y,t,t')|
\lesssim \chi_\ell(x-y)M_{\lambda,\ell},
\end{equation}
where
\begin{equation}\label{ker-upper}
M_{\lambda,\ell}:=
\min\{\lambda,\lambda^{1/2}2^{\ell/2}\}
+\min\{\lambda,\lambda^{1-m/2}2^{\ell/(2\kappa)}\}.
\end{equation}
Combining this with Lemma \ref{lem:DS} gives the following.

\begin{lem}\label{lem:short}
Let $1\leq\beta\leq2$ and let $\lambda\geq2$ be dyadic.
For every $\varepsilon>0$ and
$W_1,W_2\in L_x^{2\beta'}(B;L_t^2(I))$,
\begin{equation*}
\|W_1T_{\lambda}T_{\lambda}^*\overline{W_2}\|_{
\mathcal C^{\beta'}}
\le C_\varepsilon
\lambda^{2s_{m,\kappa,\beta}+\varepsilon}
\|W_1\|_{L_x^{2\beta'}L_t^2}\|W_2\|_{L_x^{2\beta'}L_t^2}.
\end{equation*}
\end{lem}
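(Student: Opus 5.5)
The plan is to decompose the kernel of $T_\lambda T_\lambda^*$ dyadically in $|x-y|$, apply Lemma~\ref{lem:DS} to each piece with the pointwise bound \eqref{ker-2}, and then sum the resulting geometric series in $\ell$. Since $x,y\in B$, we have $|x-y|\le 2$, so $\sum_{\ell\ge0}\chi_\ell(x-y)=1$ for $x\neq y$. The diagonal $\{x=y\}$ is a null set and does not affect the integral operator. Hence
\[
W_1T_\lambda T_\lambda^*\overline{W_2}=\sum_{\ell\ge0}W_1\mathcal T_{\lambda,\ell}\overline{W_2},
\]
where $\mathcal T_{\lambda,\ell}$ is the operator with kernel $K_{\lambda,\ell}$. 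By the triangle inequality in $\mathcal C^{\beta'}$, it suffices to bound each piece and sum.

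For each $\ell$, I would apply Lemma~\ref{lem:DS} with $q=\beta'\in[2,\infty]$, which is admissible since $1\le\beta\le2$, and with $A_\ell=M_{\lambda,\ell}$ taken from \eqref{ker-2}. Since $1-\frac1{\beta'}=\frac1\beta$, this gives
\[
\|W_1T_\lambda T_\lambda^*\overline{W_2}\|_{\mathcal C^{\beta'}}\lesssim \Big(\sum_{\ell\ge0}M_{\lambda,\ell}2^{-\ell/\beta}\Big)\|W_1\|_{L_x^{2\beta'}L_t^2}\|W_2\|_{L_x^{2\beta'}L_t^2}.
\]
It then remains to show that $\sum_\ell M_{\lambda,\ell}2^{-\ell/\beta}\lesssim_\varepsilon\lambda^{2s_{m,\kappa,\beta}+\varepsilon}$, using \eqref{2s-max}.

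\emph{First term of $M_{\lambda,\ell}$.} When $2^\ell\le\lambda$, the summand is $\lambda^{1/2}2^{\ell(\frac12-\frac1\beta)}$. The exponent is $\le0$ because $\beta\le2$, so this part is $\lesssim\lambda^{1/2}\log\lambda$; the logarithm appears only when $\beta=2$. When $2^\ell>\lambda$, the summand is $\lambda2^{-\ell/\beta}$, which sums to $\lesssim\lambda^{1-1/\beta}$. Together these are at most $\lambda^{\max\{1/2,\,1-1/\beta\}+\varepsilon}$.

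\emph{Second term of $M_{\lambda,\ell}$.} Here the transition occurs at $2^\ell=\lambda^{m\kappa}$.
\begin{itemize}
\item For $2^\ell\ge\lambda^{m\kappa}$, the summand is $\lambda2^{-\ell/\beta}$, which sums to $\lesssim\lambda^{1-m\kappa/\beta}$.
\item For $2^\ell<\lambda^{m\kappa}$, the summand is $\lambda^{1-m/2}2^{\ell(\frac1{2\kappa}-\frac1\beta)}$.
  \begin{itemize}
  \item If $\frac1{2\kappa}>\frac1\beta$, the sum is dominated by its top term, which equals $\lambda^{1-m/2+m/2-m\kappa/\beta}=\lambda^{1-m\kappa/\beta}$.
  \item Otherwise, it is $\lesssim\lambda^{1-m/2}\log\lambda\le\lambda^{1/2+\varepsilon}$, using $m>1$.
  \end{itemize}
\end{itemize}

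Combining the two terms gives the bound $\lambda^{\max\{1/2,\,1-1/\beta,\,1-m\kappa/\beta\}+\varepsilon}=\lambda^{2s_{m,\kappa,\beta}+\varepsilon}$, by \eqref{2s-max}.

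The argument is short once Lemmas~\ref{lem:ker} and \ref{lem:DS} are available. The main care needed is in the bookkeeping of the two geometric series: locating the crossover scales $2^\ell\sim\lambda$ and $2^\ell\sim\lambda^{m\kappa}$, and handling the borderline exponents, where logarithmic losses arise. These losses are absorbed into $\lambda^\varepsilon$, which is exactly why the statement carries the $\varepsilon$-loss. One should also check that $M_{\lambda,\ell}$ as defined in \eqref{ker-upper} indeed dominates the kernel on the support of $\chi_\ell$, where $|x-y|\sim2^{-\ell}$. This follows directly from Lemma~\ref{lem:ker}.
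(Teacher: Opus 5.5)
Your proposal is correct and follows the paper's proof essentially step by step. It uses the same decomposition $T_\lambda T_\lambda^*=\sum_\ell T_{\lambda,\ell}$, the same application of Lemma~\ref{lem:DS} with $q=\beta'$ and $A_\ell=M_{\lambda,\ell}$, and the same splittings at $2^\ell\sim\lambda$ and $2^\ell\sim\lambda^{m\kappa}$; the logarithmic losses at $\beta=2$ and $\beta=2\kappa$ are absorbed into $\lambda^\varepsilon$, and $m>1$ is used through $1-\frac m2<\frac12$. Your explicit case split on $\frac1{2\kappa}$ versus $\frac1\beta$ and your remark about the null diagonal only spell out what the paper leaves implicit.
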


\begin{proof}
Let $T_{\lambda,\ell}$ be the operator with kernel
$K_{\lambda,\ell}$.  Since $T_{\lambda}T_{\lambda}^*=\sum_{\ell\geq0} T_{\lambda,\ell}$, the triangle inequality in $\mathcal C^{\beta'}$ gives
\[
\|W_1T_{\lambda}T_{\lambda}^*\overline{W_2}\|_{\mathcal C^{\beta'}}
\le \sum_{\ell \ge0} \|W_1 T_{\lambda,\ell}\overline{W_2}\|_{\mathcal C^{\beta'}}.
\]
By \eqref{ker-2}, Lemma \ref{lem:DS} with $q=\beta'$ and $A_\ell=M_{\lambda,\ell}$ gives
\begin{equation*}
\|W_1 T_\lambda T_\lambda^* \overline{W_2}\|_{\mathcal C^{\beta'}}
\lesssim \Big(\sum_{\ell\geq0}M_{\lambda,\ell}2^{-\frac\ell\beta}\Big)
\|W_1\|_{L_x^{2\beta'}L_t^2}\|W_2\|_{L_x^{2\beta'}L_t^2}.
\end{equation*}
For the first term in \eqref{ker-upper}, we split the sum at $2^{-\ell}=\lambda^{-1}$. This gives
\begin{align*}
\sum_{\ell \ge0} 2^{-\frac \ell \beta}
\min\{\lambda,\lambda^{\frac12}2^{\frac \ell 2}\}
&\le \sum_{1\le 2^\ell\le \lambda} \lambda^{\frac12}2^{(\frac 12-\frac 1\beta)\ell}+ \sum_{2^\ell >\lambda} \lambda 2^{-\frac \ell\beta}
\lesssim
\lambda^{\max\{\frac 12,1- \frac1\beta\}} 
\end{align*}
for $1\le \beta<2$, and we have a logarithmic loss at $\beta=2$, which is absorbed by $C_\varepsilon \lambda^\varepsilon$.

 For the second term, we split the sum at $2^{-\ell}\sim \lambda^{-m\kappa}$, and therefore
\begin{align}
\sum_{\ell\ge0} 2^{-\frac \ell\beta}
\min\{\lambda,\lambda^{1-\frac m2}2^{\frac \ell{2\kappa}}\}
&\le \sum_{1\le 2^\ell\le \lambda^{m\kappa}} \lambda^{1-\frac m2}2^{(\frac 1{2\kappa}-\frac 1\beta)\ell}+ \sum_{2^\ell >\lambda^{m\kappa}} \lambda 2^{-\frac \ell\beta} \nonumber \\
&\leq C_\varepsilon
\lambda^{\max\{1-\frac m2, \ 1-\frac{m\kappa}\beta\}+\varepsilon}.\label{sum-ell}
\end{align}
Here, we have a logarithmic loss when $\beta=2\kappa$, which is absorbed by $C_\varepsilon \lambda^\varepsilon$. Since $m>1$, one has $1-m/2<1/2$.  The two estimates and \eqref{2s-max} prove the lemma.
\end{proof}

\subsection{The frequency-localized maximal estimate}
We now establish  frequency-localized maximal estimates in the orthonormal setting. 
\begin{prop}\label{prop:freq}
Let $1\leq\beta\leq\infty$.  For every $\varepsilon>0$ and every
dyadic $\lambda\geq2$,
\begin{equation}\label{freq-beta}
\Big\|\sum_j\mu_j| S_m^\gamma P_\lambda g_j|^2\Big\|_
{L_x^\beta(B;L_t^\infty(I))}
\leq C_\varepsilon
\lambda^{2s_{m,\kappa,\beta}+\varepsilon}
\|\mu\|_{\ell^\beta}
\end{equation}
for every orthonormal system $(g_j)$ in $L^2(\mathbb R)$ and every
$\mu=(\mu_j)\in\ell^\beta$.
\end{prop}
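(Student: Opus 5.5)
For $1\le\beta\le2$ I will combine Lemma~\ref{lem:short} with the duality principle, Lemma~\ref{lem:dual}. Take $T=T_\lambda=S_m^\gamma P_\lambda$ and $W_1=W_2=W$ in Lemma~\ref{lem:short}. This gives
\[
\|WT_\lambda T_\lambda^*\overline W\|_{\mathcal C^{\beta'}}\le C_\varepsilon\lambda^{2s_{m,\kappa,\beta}+\varepsilon}\|W\|^2_{L_x^{2\beta'}L_t^2}.
\]
Lemma~\ref{lem:dual} then yields \eqref{freq-beta} with the same constant.

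To apply Lemma~\ref{lem:dual} we first need $T_\lambda:L^2\to L^{2\beta}_xL^\infty_t$ to be bounded, at least qualitatively. This is immediate. Since the Fourier support is compact, $|T_\lambda g(x,t)|\le (2\pi)^{-1}\|\psi(\cdot/\lambda)\|_{L^2}\|\widehat g\|_{L^2}\lesssim\lambda^{1/2}\|g\|_{L^2}$ uniformly in $(x,t)$, and $B$ is bounded. Measurability of the supremum in $t$ is also harmless, because $T_\lambda g$ is continuous in $t$ ($\gamma$ is continuous).

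For $2<\beta\le\infty$ I will interpolate between $\beta=2$ and $\beta=\infty$. The case $\beta=\infty$ follows from Bessel's inequality. For fixed $(x,t)$,
\[
\sum_j|T_\lambda g_j(x,t)|^2=\sum_j|\langle g_j,k_{x,t}\rangle|^2\le\|k_{x,t}\|_{L^2}^2\lesssim\lambda,
\]
where $\widehat{k_{x,t}}(\xi)=e^{-i\gamma(x,t)\xi-it|\xi|^m}\psi(\xi/\lambda)$ up to constants. Hence the left-hand side of \eqref{freq-beta} is at most $C\lambda\|\mu\|_{\ell^\infty}$, and $2s_{m,\kappa,\infty}=1$. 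At $\beta=2$ we have $2s_{m,\kappa,2}=\max\{\tfrac12,\tfrac12,1-\tfrac{m\kappa}2\}$.

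The interpolation is cleanest on the dual side. The map $W\mapsto WT_\lambda T_\lambda^*\overline W$ is bilinear in $(W_1,W_2)$. At one endpoint it sends $L^4_xL^2_t\times L^4_xL^2_t\to\mathcal C^2$ with constant $\lambda^{2s_{m,\kappa,2}+\varepsilon}$. At the other it sends $L^2_xL^2_t\times L^2_xL^2_t\to\mathcal C^1$ with constant $\lambda$. The $\mathcal C^1$ bound is the dual of the $\beta=\infty$ estimate, and can also be checked directly: $\|WT_\lambda\|_{\mathcal C^2}^2=\iint|W|^2\|k_{x,t}\|^2\lesssim\lambda\|W\|_{L^2}^2$.

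Bilinear complex interpolation, exactly as at the end of the proof of Lemma~\ref{lem:DS}, then gives the $\mathcal C^{\beta'}$ bound for $1\le\beta'\le2$. The constant is $\lambda^{(1-\theta)\cdot2s_{m,\kappa,2}+\theta+\varepsilon}$, where $1/\beta=(1-\theta)/2$.

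It remains to check the exponent, and this is the step I expect to need the most care. I must verify that this interpolated exponent is at most $2s_{m,\kappa,\beta}$ from \eqref{2s-max}. Write $u=1/\beta\in[0,1/2]$. The interpolated exponent is linear in $u$: it equals $1$ at $u=0$ and $\max\{\tfrac12,1-\tfrac{m\kappa}2\}$ at $u=1/2$. On the other hand, $2s_{m,\kappa,\beta}$ is the maximum of $\tfrac12$, $1-u$, and $1-m\kappa u$, and each of these equals the corresponding candidate value at both $u=0$ and $u=1/2$.

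If $m\kappa\ge1$, the maximum is $1-u$, which is linear, so the two exponents agree. If $m\kappa<1$, the maximum is $1-m\kappa u$, which is again linear with matching endpoints. In both cases the exponents coincide on $[0,1/2]$, which gives \eqref{freq-beta} for all $\beta$.
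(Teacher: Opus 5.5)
Your argument is correct. For $1\le\beta\le2$ and for $\beta=\infty$ it coincides with the paper's proof: Lemma~\ref{lem:short} with $W_1=W_2$ followed by Lemma~\ref{lem:dual}, and Bessel's inequality, respectively. The only real difference is how you fill in $2<\beta<\infty$.

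\textbf{The paper's route.} It interpolates on the primal side. For a fixed orthonormal system it regards $\mu\mapsto\sum_j\mu_j|S_m^\gamma P_\lambda g_j|^2$ as a linear map $\ell^p\to L^p_x(B;L^\infty_t(I))$, bounded at $p=2$ and $p=\infty$, and interpolates that map directly.

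\textbf{Your route.} You interpolate on the dual side instead. One endpoint is the bilinear $\mathcal C^2$ bound of Lemma~\ref{lem:short} with $L^4_xL^2_t$ weights. The other is a $\mathcal C^1$ bound with $L^2_xL^2_t$ weights, which you check directly through the Hilbert--Schmidt factorization $\|W_1T_\lambda T_\lambda^*\overline{W_2}\|_{\mathcal C^1}\le\|W_1T_\lambda\|_{\mathcal C^2}\|W_2T_\lambda\|_{\mathcal C^2}$. You then pass back through Lemma~\ref{lem:dual}.

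\textbf{What each buys.} Your version reuses the finite-exponent Schatten/mixed-norm interpolation already used in Lemma~\ref{lem:DS}. It also yields a slightly stronger intermediate statement: the bilinear Schatten bound of Lemma~\ref{lem:short} for the whole range $1\le\beta\le\infty$. The paper's version is shorter and needs no second application of duality, at the cost of interpolating into $L^\infty_t$-valued spaces with an $L^\infty_x$ endpoint.

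The exponent bookkeeping is the same in both, namely $\tfrac2\beta\max\{\tfrac12,1-\tfrac{m\kappa}2\}+1-\tfrac2\beta=2s_{m,\kappa,\beta}$, and your case check ($m\kappa\ge1$ versus $m\kappa<1$) is right. Your verification that $T_\lambda:L^2\to L^{2\beta}_xL^\infty_t$ is bounded, which is a hypothesis of Lemma~\ref{lem:dual}, is a detail the paper leaves implicit.
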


\begin{proof} 
We first consider the case $1\leq\beta\leq2$.  Taking $W_1=W_2$ in Lemma~\ref{lem:short} and applying Lemma \ref{lem:dual}, we obtain
\begin{equation*}
 \Big\|\sum_j\mu_j|S_m^\gamma P_\lambda g_j|^2\Big\|_
 {L_x^\beta(B;L_t^\infty(I))}
 \leq C_\varepsilon
 \lambda^{2s_{m,\kappa,\beta}+\varepsilon}
 \|\mu\|_{\ell^\beta},
\end{equation*}
which proves \eqref{freq-beta} for $1\leq\beta\leq2$. 

We next consider $2<\beta\leq\infty$.  For a fixed $(x,t)$, Bessel's inequality gives
\begin{equation*}
 \sum_j|S_m^\gamma P_\lambda g_j(x,t)|^2
 =(2\pi)^{-2} \sum_j \big|\big\langle
 e^{i\gamma(x,t)\xi+it|\xi|^m}\psi(\lambda^{-1}\xi),
 \overline{\widehat g_j}\big\rangle\big|^2
 \lesssim\|\psi(\lambda^{-1}\cdot)\|_{L^2}^2\lesssim \lambda.
\end{equation*}
Here, Plancherel's theorem shows that $(\frac1{\sqrt{2\pi}}\widehat{g}_j)$ is orthonormal in $L^2(\mathbb{R})$.
Consequently,
\begin{equation}\label{freq-inf}
 \Big\|\sum_j\mu_j|S_m^\gamma P_\lambda g_j|^2\Big\|_{L_{x,t}^\infty}
 \lesssim \lambda\|\mu\|_{\ell^\infty}.
\end{equation}
Since $2s_{m,\kappa,\infty}=1$, this proves \eqref{freq-beta} for $\beta=\infty$. Complex interpolation between $\ell^p\to L_x^p(B;L_t^\infty(I))$ for $p=2,\infty$ gives the exponent, up to $C_\varepsilon \lambda^\varepsilon$-loss,
\[
\frac2\beta\max\big\{\frac12,1-\frac{m\kappa}{2}\big\}+(1-\frac2\beta).
\]
If $m\kappa\geq1$, this equals $1-\frac 1\beta$, whereas if $m\kappa\leq1$, it equals $1-\frac{m\kappa}\beta$. In either case it is exactly $2s_{m,\kappa,\beta}$.  This proves \eqref{freq-beta} and completes the proof.
\end{proof}

\subsection{Proof of Theorem~\ref{thm:max}: the sufficiency part}\label{subsec:proof}
Let $1\le \beta \le \infty$ and $s>s_{m,\kappa,\beta}$, and let $(f_j)$ be an orthonormal family in $H^s(\R)$. Set $g_j=\langle D\rangle^sf_j$, so that $(g_j)$ is orthonormal in $L^2(\mathbb R)$.  Choose a multiplier $\widetilde P_\lambda$ with symbol supported in $\lambda/4\le|\xi| \le 4\lambda$ such that $\widetilde P_\lambda P_\lambda = P_\lambda$.  For $\lambda\geq2$, let $Q_{\lambda,s}:=\lambda^s\widetilde P_\lambda\langle D\rangle^{-s}$. This operator is uniformly bounded on $L^2$, while $P_\lambda\langle D\rangle^{-s} =\lambda^{-s}P_\lambda Q_{\lambda,s}$. Proposition~\ref{prop:freq} and Lemma~\ref{lem:TB-T1} give for $\lambda\ge2$
\begin{equation}\label{SPD}
 \Big\|\sum_j\mu_j
 |S_m^\gamma P_\lambda\langle D\rangle^{-s}g_j|^2\Big\|_
 {L_x^\beta L_t^\infty}
 \leq C_\varepsilon
 \lambda^{2s_{m,\kappa,\beta}+\varepsilon-2s}
 \|\mu\|_{\ell^\beta}.
\end{equation}

For the low-frequency part, the Bessel argument used in the proof of \eqref{freq-inf} yields, for every $1\le \beta\le \infty$, 
\begin{equation}\label{SPDlow}
\Big\|\sum_j\mu_j
|S_m^\gamma P_{0}\langle D\rangle^{-s}g_j|^2\Big\|_
{L_x^\beta L_t^\infty}
\lesssim \|\mu\|_{\ell^\beta}.
\end{equation}
By Minkowski's inequality for the sum over dyadic $\lambda$, together with
\eqref{SPDlow} and \eqref{SPD}, we have
\begin{align*}
\Big\|\sum_j\mu_j|S_m^\gamma f_j|^2\Big\|_{L_x^\beta L_t^\infty}^{1/2}
&\le\Big\|\sum_j|\mu_j||S_m^\gamma P_{0}f_j|^2\Big\|_{L_x^\beta L_t^\infty}^{1/2}
 +\sum_{\lambda\ge2}\Big\|\sum_j|\mu_j||S_m^\gamma P_\lambda f_j|^2\Big\|_{L_x^\beta L_t^\infty}^{1/2}\\
&\lesssim_\varepsilon\Big(1+\sum_{\lambda\ge2}
 \lambda^{s_{m,\kappa,\beta}+\frac\varepsilon2-s}\Big)\|\mu\|_{\ell^\beta}^{1/2}.
\end{align*}
Since $s>s_{m,\kappa,\beta}$, we may choose $\varepsilon>0$ so that $\varepsilon<2(s-s_{m,\kappa,\beta})$. Then the sum converges, which proves the claim.
\qed

\section{Concave case: $0<m<1$}\label{sec:concave}
Throughout this section, we fix $0<m<1$, $0<\kappa\leq1$ and $\gamma\in\Gamma(\kappa)$, and deal with the sufficiency in Theorem \ref{thm:small}.  As before, we postpone the proof of  necessity in Theorem \ref{thm:small} until Section \ref{sec:sharp}. We first establish kernel estimates.
\subsection{Kernel estimates for concave phases}

Let $P_\lambda$, $\lambda\geq2$, be the Littlewood--Paley projection operator defined as above and define $T_\lambda$ as in \eqref{Tlambda}. The kernel of $T_\lambda T_\lambda^*$ is $K_\lambda(x,y,t,t')$ defined by \eqref{Ker-exp}. The kernel estimate below follows from the argument in the proof of \cite[Lemma~4.1]{YZ}.

\begin{lem}\label{lem:ker2}
Let $0<m<1$, $0<\kappa\leq1$, and $\gamma\in\Gamma(\kappa)$.
For  $\lambda\geq2$ and 
$(x,t),(y,t')\in B\times I$, we have
\begin{equation}\label{k2}
|K_\lambda(x,y,t,t')|
\lesssim
\lambda\big(1+\lambda^m|x-y|^{\frac1\kappa}\big)^{-\frac12}.
\end{equation}
\end{lem}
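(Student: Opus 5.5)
We follow the proof of Lemma \ref{lem:ker} and rescale $\xi\to\lambda\xi$ as in \eqref{Kc}, so that
\[
K_\lambda=\frac{\lambda}{2\pi}\int e^{i\Phi_\lambda}\psi(\xi)^2\,d\xi,\qquad
\Phi_\lambda=\lambda\big(\gamma(x,t)-\gamma(y,t')\big)\xi+\lambda^m(t-t')|\xi|^m .
\]
The trivial bound $|K_\lambda|\lesssim\lambda$ settles the case $\lambda^m|x-y|^{1/\kappa}\lesssim1$. So we may assume $\lambda^m|x-y|^{1/\kappa}\gg1$ and aim for the bound $\lambda^{1-m/2}|x-y|^{-1/(2\kappa)}$. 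On $\operatorname{supp}\psi$ we have $|\xi|\sim1$ and $|\xi|^m$ is smooth there. Moreover $\partial_\xi^2\Phi_\lambda=m(m-1)\lambda^m(t-t')|\xi|^{m-2}$, so $|\partial^2_\xi\Phi_\lambda|\sim\lambda^m|t-t'|$; this holds because $m\neq1$, although the sign is now negative. As before, we split according to whether $|x-y|\le C_0|t-t'|^\kappa$ or $|x-y|\ge C_0|t-t'|^\kappa$, where $C_0$ is large.

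\emph{Case $|x-y|\le C_0|t-t'|^\kappa$.} Here $|t-t'|\gtrsim|x-y|^{1/\kappa}$, so $|\partial_\xi^2\Phi_\lambda|\gtrsim\lambda^m|x-y|^{1/\kappa}$. Van der Corput's lemma with second derivatives, applied on each of the two intervals of $\operatorname{supp}\psi$ and with amplitude $\psi^2$ of bounded variation, gives $|K_\lambda|\lesssim\lambda(\lambda^m|x-y|^{1/\kappa})^{-1/2}$. This is exactly \eqref{k2}.

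\emph{Case $|x-y|\ge C_0|t-t'|^\kappa$.} Using \eqref{holder-k} and \eqref{lip-x} with $C_0$ large, we get $|\gamma(x,t)-\gamma(y,t')|\sim|x-y|$. This case is the main point, because the concave phase must now be handled differently from the convex one. We compare the two terms in $\partial_\xi\Phi_\lambda=\lambda(\gamma(x,t)-\gamma(y,t'))+m\lambda^m(t-t')|\xi|^{m-2}\xi$.

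\emph{Subcase $|x-y|\ge C_1\lambda^{m-1}|t-t'|$,} with $C_1$ large. Then $|\partial_\xi\Phi_\lambda|\gtrsim\lambda|x-y|$, and all higher derivatives are $O(\lambda^m|t-t'|)=O(\lambda|x-y|)$. Integration by parts gives $|K_\lambda|\lesssim\lambda(1+\lambda|x-y|)^{-N}$. Since $\kappa\le1$, $m<1$ and $|x-y|\le2$, we have $\lambda|x-y|\gtrsim\lambda^m|x-y|^{1/\kappa}$. Hence this bound is dominated by the right side of \eqref{k2}.

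\emph{Subcase $|x-y|\le C_1\lambda^{m-1}|t-t'|$.} Then $|\partial_\xi^2\Phi_\lambda|\sim\lambda^m|t-t'|\gtrsim\lambda|x-y|\ge\lambda^m|x-y|^{1/\kappa}$, where the last inequality again uses $|x-y|\lesssim1$, $m<1$ and $\kappa\le1$. Van der Corput's lemma therefore yields \eqref{k2} once more.

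The only delicate step is checking that every alternative lower bound ($\lambda|x-y|$ or $\lambda^m|t-t'|$) dominates $\lambda^m|x-y|^{1/\kappa}$ up to constants, uniformly for $x,y\in B$. These comparisons rely on $0<m<1$ and $0<\kappa\le1$, which is why the concave case yields only the single term in \eqref{k2}, with no separate $(1+\lambda|x-y|)^{-1/2}$ contribution as in Lemma \ref{lem:ker}.
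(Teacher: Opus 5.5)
Your proof is correct and takes the same route as the paper: the same rescaling, the same split according to whether $|x-y|\ge C_0|t-t'|^\kappa$, integration by parts in the separated region, and van der Corput's lemma with $|\partial_\xi^2\Phi_\lambda|\gtrsim\lambda^m|x-y|^{1/\kappa}$ in the other region. The one difference is that your second subcase $|x-y|\le C_1\lambda^{m-1}|t-t'|$, carried over from the convex proof, is unnecessary: for $0<m<1$ the paper notes directly that $\lambda^m|t-t'|\le\lambda|t-t'|^\kappa\le C_0^{-1}\lambda|x-y|$, so that subcase is empty once $C_0\ge C_1$, although your treatment of it is still correct.
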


\begin{proof}
We write $K_\lambda$ as in \eqref{Kc}, with the phase $\Phi_\lambda$. As before, we divide the proof into the two cases $|x-y|\ge C_0 |t-t'|^\kappa$ and $|x-y|\le C_0 |t-t'|^\kappa$ with $C_0$ sufficiently large.

Suppose first that $|x-y|\ge C_0 |t-t'|^\kappa$. In this case, the phase satisfies $|\partial_\xi\Phi_\lambda|\gtrsim\lambda|x-y|$. In fact, by \eqref{holder-k} and \eqref{lip-x},
\begin{equation}\label{low-gamma}
 |\gamma(x,t)-\gamma(y,t')| \sim |x-y|.
\end{equation} 
Since $0<m<1$, $\lambda \ge2$, $0<\kappa \le 1$, and $|t-t'|\le1$, we have
\begin{equation}\label{t-upper}
\lambda^m|t-t'|\le\lambda|t-t'|^\kappa
 \le C_0^{-1}\lambda|x-y|.
\end{equation}
Choosing $C_0$ sufficiently large,  \eqref{low-gamma} and \eqref{t-upper} imply
\[
|\partial_\xi \Phi_\lambda|\ge \lambda|\gamma(x,t)-\gamma(y,t')|-
m\lambda^m|t-t'||\xi|^{m-1}\gtrsim \lambda|x-y|
\]
on $\operatorname{supp}\psi$. Also, for every $\alpha \ge1$, we have $|\partial_\xi^\alpha\Phi_\lambda|\lesssim \lambda|x-y|+\lambda^m|t-t'|\lesssim\lambda|x-y|$ on $\operatorname{supp} \psi$ by \eqref{t-upper}. Thus, repeated integration by parts gives
\begin{equation}\label{ker-triv}
 |K_\lambda(x,y,t,t')|\lesssim_N\lambda(1+\lambda|x-y|)^{-N}
\end{equation}
for every $N\ge1$. Since $m<1\le1/\kappa$ and $|x-y|\le2$, we have $\lambda^m|x-y|^{1/\kappa}\lesssim \lambda|x-y|$. Thus, \eqref{ker-triv} implies  \eqref{k2}.

We now consider the case
$|x-y|\le C_0 |t-t'|^\kappa$.  Then
$|t-t'|\gtrsim|x-y|^{1/\kappa}$.  Since $|\xi|\sim1$ on
$\operatorname{supp}\psi$, one has
$|\partial_\xi^2 \Phi_\lambda|\gtrsim \lambda^m|t-t'| \gtrsim\lambda^m|x-y|^{1/\kappa}$
and van der Corput's lemma gives
\begin{align*}
|K_\lambda(x,y,t,t')|
&\lesssim\lambda (1+ \lambda^m |x-y|^{1/{\kappa}})^{-1/2}.
\end{align*}
This proves \eqref{k2}.
\end{proof}
In contrast to Lemma \ref{lem:ker}, the bound \eqref{k2} contains no term of the form $\lambda(1+\lambda|x-y|)^{-1/2}$.

\subsection{Bilinear Schatten estimates} 
Note that
\[
2\sigma_{m,\kappa,\beta}= \max \Big\{1-\frac m2,\, 1-\frac{m\kappa}\beta\Big\}.
\]
As in the convex case, a dyadic decomposition of the spatial variables in the kernel of  $T_\lambda T_\lambda^*$ and Lemma  \ref{lem:DS} imply the following bilinear estimates in Schatten spaces.
\begin{lem}\label{lem:sch}
Let $1\leq\beta\leq2$ and let $\lambda\geq2$.  For every $\varepsilon>0$ and $W_1,W_2\in L_x^{2\beta'}(B;L_t^2(I))$,
\begin{equation*}
\|W_1T_\lambda T_\lambda^*\overline{W_2}\|_{
\mathcal C^{\beta'}}
\le C_\varepsilon
\lambda^{2\sigma_{m,\kappa,\beta}+\varepsilon}
\|W_1\|_{L_x^{2\beta'}L_t^2}\|W_2\|_{L_x^{2\beta'}L_t^2}.
\end{equation*}
\end{lem}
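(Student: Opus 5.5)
We mirror the proof of Lemma~\ref{lem:short}, using the kernel bound of Lemma~\ref{lem:ker2} in place of Lemma~\ref{lem:ker}.

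\emph{Step 1 (spatial decomposition).} Let $T_{\lambda,\ell}$ be the integral operator with kernel $K_{\lambda,\ell}=\chi_\ell(x-y)K_\lambda$. Then $T_\lambda T_\lambda^*=\sum_{\ell\ge0}T_{\lambda,\ell}$ on $B\times I$, since $|x-y|\le 2$ there. On the support of $\chi_\ell$ we have $|x-y|\sim 2^{-\ell}$, so \eqref{k2} gives
\[
|K_{\lambda,\ell}(x,y,t,t')|\lesssim \chi_\ell(x-y)\,N_{\lambda,\ell},\qquad N_{\lambda,\ell}:=\min\big\{\lambda,\ \lambda^{1-\frac m2}2^{\frac{\ell}{2\kappa}}\big\}.
\]

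\emph{Step 2 (Schatten bounds).} Lemma~\ref{lem:DS} applies with $q=\beta'\in[2,\infty]$ and $A_\ell=N_{\lambda,\ell}$. Summing over $\ell$ with the triangle inequality in $\mathcal C^{\beta'}$, and using $1-1/\beta'=1/\beta$, we get
\[
\|W_1T_\lambda T_\lambda^*\overline{W_2}\|_{\mathcal C^{\beta'}}\lesssim \Big(\sum_{\ell\ge0}2^{-\frac\ell\beta}N_{\lambda,\ell}\Big)\|W_1\|_{L_x^{2\beta'}L_t^2}\|W_2\|_{L_x^{2\beta'}L_t^2}.
\]

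\emph{Step 3 (the dyadic sum).} Split the sum at $2^\ell\sim\lambda^{m\kappa}$, which is where $\lambda^{1-m/2}2^{\ell/(2\kappa)}=\lambda$. This is exactly the computation in \eqref{sum-ell}. For $2^\ell\le\lambda^{m\kappa}$ the geometric sum $\lambda^{1-\frac m2}\sum 2^{(\frac1{2\kappa}-\frac1\beta)\ell}$ is bounded by $\lambda^{1-\frac m2}$ when $\beta<2\kappa$ and by $\lambda^{1-\frac{m\kappa}{\beta}}$ when $\beta>2\kappa$. When $\beta=2\kappa$ it carries an extra factor $\log\lambda$, which is absorbed into $C_\varepsilon\lambda^\varepsilon$. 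The tail $\sum_{2^\ell>\lambda^{m\kappa}}\lambda 2^{-\ell/\beta}$ is $\lesssim\lambda^{1-\frac{m\kappa}{\beta}}$, since $\beta<\infty$. Altogether the sum is at most $C_\varepsilon\lambda^{\max\{1-\frac m2,\,1-\frac{m\kappa}\beta\}+\varepsilon}=C_\varepsilon\lambda^{2\sigma_{m,\kappa,\beta}+\varepsilon}$.

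There is no real obstacle here. The only point to check is that, unlike the convex case, there is no extra term of the form $\lambda(1+\lambda|x-y|)^{-1/2}$ to handle, which is exactly what \eqref{k2} guarantees. One also needs the borderline logarithm at $\beta=2\kappa$, which can occur only when $\kappa\ge 1/2$, to be harmlessly absorbed into the $\varepsilon$-loss.
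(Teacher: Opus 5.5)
Your proposal is correct and follows the paper's proof essentially step for step. The paper likewise decomposes with $\chi_\ell(x-y)$ and bounds the pieces by $\min\{\lambda,\lambda^{1-m/2}2^{\ell/(2\kappa)}\}$ via Lemma~\ref{lem:ker2}. It then applies Lemma~\ref{lem:DS} with $q=\beta'$ and reuses the sum \eqref{sum-ell}, split at $2^\ell\sim\lambda^{m\kappa}$, with the logarithm at $\beta=2\kappa$ absorbed into $\lambda^\varepsilon$.
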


\begin{proof}
We use the smooth partition of unity $\{ \chi_\ell\}$ defined in Section \ref{sec:24}. We decompose $T_\lambda T_\lambda^*=\sum_{\ell\geq0} T_{\lambda,\ell}$, where  $T_{\lambda,\ell}$ has kernel $K_{\lambda,\ell}:= \chi_\ell(x-y)K_\lambda(x,y,t,t')$. Note that $|x-y|\sim2^{-\ell}$ on $\operatorname{supp}\chi_\ell$, and Lemma \ref{lem:ker2} gives
\[
| K_{\lambda,\ell}(x,y,t,t')| \lesssim \chi_\ell(x-y)\widetilde M_{\lambda,\ell},
\]
where 
\begin{equation*}
\widetilde M_{\lambda,\ell}:=
\min\big\{\lambda, \lambda^{1-\frac m2}2^{\frac\ell{2\kappa}}\big\}.
\end{equation*}
Hence, the triangle inequality, followed by Lemma \ref{lem:DS} with $q=\beta'$, gives
\begin{equation*}
\|W_1T_\lambda T_\lambda^*\overline{W_2}\|_{
\mathcal C^{\beta'}}
\lesssim
\Big(\sum_{\ell\geq0}
\widetilde M_{\lambda,\ell}2^{-\frac\ell\beta}\Big)
\|W_1\|_{L_x^{2\beta'}L_t^2}\|W_2\|_{L_x^{2\beta'}L_t^2}.
\end{equation*}

The sum was already estimated in \eqref{sum-ell}, where the assumption $m>1$ was not used. At $\beta=2\kappa$, the logarithmic loss is absorbed by $\lambda^\varepsilon$. Hence it is bounded by $C_\varepsilon\lambda^{\max\{1-\frac m2,\,1-\frac{m\kappa}\beta\}+\varepsilon} =C_\varepsilon\lambda^{2\sigma_{m,\kappa,\beta}+\varepsilon}$, which proves the lemma.
\end{proof}

\subsection{The frequency-localized maximal estimate}
We now prove the analogue of Proposition~\ref{prop:freq} for $0<m<1$.
\begin{prop}\label{prop:freq2}
Let $1\leq\beta\leq\infty$.  For every $\varepsilon>0$ and $\lambda\geq2$, 
\begin{equation}\label{freq-c}
\Big\|\sum_j\mu_j|S_m^\gamma P_\lambda g_j|^2\Big\|_
{L_x^\beta(B;L_t^\infty(I))}
\lesssim_\varepsilon
\lambda^{2\sigma_{m,\kappa,\beta}+\varepsilon}
\|\mu\|_{\ell^\beta}
\end{equation}
holds for every orthonormal system $(g_j)$ in $L^2(\mathbb R)$,
and every $\mu=(\mu_j)\in\ell^\beta$.
\end{prop}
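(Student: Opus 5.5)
The proof will follow that of Proposition~\ref{prop:freq}, with Lemma~\ref{lem:sch} in place of Lemma~\ref{lem:short}. There are two regimes: $1\le\beta\le2$, handled by duality, and $2<\beta\le\infty$, handled by interpolation with a Bessel bound.

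\emph{Step 1: $1\le\beta\le2$.} We take $W_1=W_2=W$ in Lemma~\ref{lem:sch}. This gives
\[
\|WT_\lambda T_\lambda^*\overline W\|_{\mathcal C^{\beta'}}\le C_\varepsilon\lambda^{2\sigma_{m,\kappa,\beta}+\varepsilon}\|W\|_{L_x^{2\beta'}L_t^2}^2 .
\]
Applying Lemma~\ref{lem:dual} with $T=T_\lambda=S_m^\gamma P_\lambda$ then yields \eqref{freq-c}. The boundedness of $T_\lambda:L^2\to L_x^{2\beta}L_t^\infty$ required in Lemma~\ref{lem:dual} is immediate, since frequency localization gives $|T_\lambda g|\lesssim\lambda^{1/2}\|g\|_{L^2}$ pointwise.

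\emph{Step 2: $\beta=\infty$.} The Bessel inequality argument leading to \eqref{freq-inf} never used the sign of $m-1$, so it applies verbatim. It gives
\[
\Big\|\sum_j\mu_j|S_m^\gamma P_\lambda g_j|^2\Big\|_{L^\infty_{x,t}}\lesssim\lambda\|\mu\|_{\ell^\infty},
\]
which matches $2\sigma_{m,\kappa,\infty}=1$.

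\emph{Step 3: $2<\beta<\infty$.} We interpolate between the endpoint bound at $\beta=2$ (exponent $2\sigma_{m,\kappa,2}+\varepsilon$) and the bound at $\beta=\infty$ (exponent $1$). For fixed $(g_j)$, the map $\mu\mapsto\sum_j\mu_j|S_m^\gamma P_\lambda g_j|^2$ is linear, so complex interpolation between $\ell^2\to L^2_xL^\infty_t$ and $\ell^\infty\to L^\infty_xL^\infty_t$ applies. This is the same step used in Proposition~\ref{prop:freq}. The resulting exponent, up to $\lambda^\varepsilon$, is
\[
\frac2\beta\max\Big\{1-\frac m2,\,1-\frac{m\kappa}2\Big\}+\Big(1-\frac2\beta\Big).
\]
Since $\kappa\le1$, the maximum equals $1-\frac{m\kappa}2$, so the exponent is $1-\frac{m\kappa}\beta$.

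It remains to check that this matches $2\sigma_{m,\kappa,\beta}=\max\{1-\frac m2,\,1-\frac{m\kappa}\beta\}$. For $\beta>2$ we have $\frac{m\kappa}\beta<\frac m2$, hence $1-\frac{m\kappa}\beta>1-\frac m2$. Thus $2\sigma_{m,\kappa,\beta}=1-\frac{m\kappa}\beta$, exactly the interpolated exponent.

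The only points needing care are the justification of the interpolation for the mixed norm $L_x^\beta L_t^\infty$ and this exponent bookkeeping; both are routine. There is no real obstacle, since all the work is contained in Lemma~\ref{lem:sch}.
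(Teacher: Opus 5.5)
Your proposal is correct and follows the paper's proof essentially step for step. You use duality via Lemma~\ref{lem:sch} and Lemma~\ref{lem:dual} for $1\le\beta\le2$, the Bessel bound \eqref{freq-inf} at $\beta=\infty$, and linear interpolation in $\mu$ between $\beta=2$ and $\beta=\infty$ to obtain the exponent $1-\frac{m\kappa}{\beta}=2\sigma_{m,\kappa,\beta}$; your remark on the boundedness of $T_\lambda:L^2\to L_x^{2\beta}L_t^\infty$ is a correct detail that the paper leaves implicit.
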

\begin{proof}
We argue as in the proof of Proposition \ref{prop:freq}. For $1\le \beta \le 2$, taking $W_1=W_2$ in Lemma \ref{lem:sch} and applying Lemma \ref{lem:dual} gives \eqref{freq-c}.

We next consider $2< \beta \le \infty$. The estimate \eqref{freq-inf} remains valid for $0<m<1$ and proves
\eqref{freq-c} when $\beta=\infty$, since $2\sigma_{m,\kappa,\infty}=1$.

Moreover, at $\beta=2$, \eqref{freq-c} holds with the exponent $2\sigma_{m,\kappa,2}+\varepsilon=1-m\kappa/2+\varepsilon$. We interpolate the associated linear density from $\mu \in \ell^p$ to $\sum_j\mu_j |S_m^\gamma P_\lambda g_j|^2 \in L_x^p(B;L_t^\infty(I))$ with $p=2,\infty$. The resulting exponent is
\[
\frac2\beta\Big(1-\frac{m\kappa}2\Big)+\Big(1-\frac2\beta\Big)
=1-\frac{m\kappa}\beta.
\]
Since $\kappa \le1$ and $\beta\ge2$, we have $1-\frac{m\kappa}\beta\geq1-\frac m2$. This exponent equals $2\sigma_{m,\kappa,\beta}$, so \eqref{freq-c} follows.
\end{proof}

\subsection{Proof of the sufficiency part of Theorem \ref{thm:small}} 
Let $s>\sigma_{m,\kappa,\beta}$, let $(f_j)$ be an orthonormal family in $H^s(\R)$, and set $g_j=\langle D\rangle^sf_j$. We repeat the argument in Section~\ref{subsec:proof}. Using Proposition~\ref{prop:freq2} and Lemma \ref{lem:TB-T1}, we obtain
\begin{equation*}
\Big\|\sum_j\mu_j
|S_m^\gamma P_\lambda\langle D\rangle^{-s}g_j|^2\Big\|_
{L_x^\beta L_t^\infty}
\leq C_\varepsilon
\lambda^{2\sigma_{m,\kappa,\beta}+\varepsilon-2s}
\|\mu\|_{\ell^\beta}.
\end{equation*}
The low-frequency estimate \eqref{SPDlow} also holds.
Choose $0<\varepsilon<2(s-\sigma_{m,\kappa,\beta})$. Summing over dyadic $\lambda$ as before, we obtain \eqref{on-small}.
\qed

\section{Sharpness of regularity}\label{sec:sharp}

We first use single function examples to obtain the necessary regularity for the $L_x^q$ maximal estimate. Orthonormal translates of these packets give the additional necessary conditions involving the exponent $\beta$. The same examples apply to both convex and concave phases.

\subsection{Necessary conditions of Theorems \ref{thm:max} and \ref{thm:small}}\label{subsec:nec}

\begin{prop}\label{prop:nece}
Let $m>0$, $m\neq1$, $0<\kappa\le1$, $2\le q \le \infty$, and $1\le\beta\le\infty$. Suppose that \eqref{max-main} (equivalently, \eqref{on-small}) holds for the curve $\gamma(x,t)=x-t^\kappa$, for all orthonormal systems in
$H^s(\mathbb R)$ and all coefficient sequences in $\ell^\beta$.
Then $s\geq s_{m,\kappa,q,\beta}$ if $m>1$, and $s\geq\sigma_{m,\kappa,q,\beta}$ if $0<m<1$.
\end{prop}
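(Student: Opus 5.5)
The plan is to test the estimate on explicit frequency-localized data and compare both sides as $\lambda\to\infty$. The necessary condition $s_{m,\kappa,q,\beta}$ (resp.\ $\sigma_{m,\kappa,q,\beta}$) is a maximum of three terms, and the argument for the diagonal case in Section~\ref{subsec:diag} shows that each term can be handled separately. This gives three groups of examples: the base conditions $s\ge\frac14$ (for $m>1$) and $s\ge\frac12-\frac m4$ (for $0<m<1$); the single-function conditions involving $q$; and the orthonormal conditions involving $\beta$. Throughout, I use \eqref{max-main} with a single $f$ and $\mu=(1)$, which is just the $L^q_x L^\infty_t$ maximal estimate for $\Sf$. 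This estimate is stronger than \eqref{maxL2} on $B$ because $q\ge2$.

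\emph{Base conditions.} Since $q\ge2$, the single-function case controls the $L^2(B)$ maximal estimate \eqref{maxL2}. The known sharpness of Theorem~\ref{thm:single} for $\gamma(x,t)=x-t^\kappa$ then applies: Dahlberg--Kenig/Sj\"olin type examples give $s\ge\frac14$ for $m>1$, and the Cho--Shiraki/Walther type examples give $s\ge\frac12-\frac m4$ for $0<m<1$. For completeness I would redo the first of these directly. Take $\widehat f$ a bump at scale $\delta=\lambda^{1/2}$ centred at $\xi=-\lambda$. Linearizing the phase, one gets $|\Sf|\sim\delta$ wherever $|x-t^\kappa-m\lambda^{m-1}t|\lesssim\delta^{-1}$. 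For each $x\in(0,1)$ such a $t\lesssim\lambda^{1-m}$ exists, and for this $t$ the quadratic error $t\lambda^{m-2}\delta^2$ is $O(1)$.

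\emph{Conditions involving $q$.} Take $\widehat f=\phi((\xi-\lambda)/\lambda)$ with $\phi$ a smooth bump, so that $\|f\|_{H^s}^2\sim\lambda^{2s+1}$.
\begin{itemize}
\item If $0<t\le c\lambda^{-m}$, then $t|\xi|^m=O(c)$ on the support, and therefore $|\Sf(x,t)|\gtrsim\lambda$ whenever $|x-t^\kappa|\le c\lambda^{-1}$.
\item As $t$ ranges over $[0,c\lambda^{-m}]$, the quantity $t^\kappa$ sweeps $[0,c'\lambda^{-m\kappa}]$. Hence $\sup_t|\Sf(x,t)|\gtrsim\lambda$ on a set of $x$ of measure $\gtrsim\lambda^{-m\kappa}$.
\item Inserting this into \eqref{max-main} gives $\lambda^2\lambda^{-2m\kappa/q}\lesssim\lambda^{2s+1}$, i.e.\ $s\ge\frac12-\frac{m\kappa}q$.
\end{itemize}
For the $\min\{m\kappa,1\}$ version when $m>1$, I would restrict to $t=0$ alone. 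There $|f|\gtrsim\lambda$ on an interval of length $\lambda^{-1}$, which gives $s\ge\frac12-\frac1q$, the failure of $H^s\subset L^q$.

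\emph{Conditions involving $\beta$.} I would take $N\sim\lambda^{m\kappa}$ (resp.\ $N\sim\lambda$ for the $\min\{m\kappa,1\}=1$ branch) modulated copies $\widehat{f_k}(\xi)=\widehat f(\xi)e^{-ik a\xi}$. The shift $a$ is chosen with $a\gtrsim\lambda^{-m\kappa}$ (resp.\ $\lambda^{-1}$), so that the packets $f_k$ are spatially separated translates covering a fixed subinterval of $B$, and I set $\mu_k=1$. Then $\sum_k|\Sf_k|^2\gtrsim\lambda^2$ in $L^\infty_t$ on a set of $x$ of measure $\sim1$, while $\|\mu\|_{\ell^\beta}=N^{1/\beta}$. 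After normalizing to $\|f_k\|_{H^s}=1$, the estimate gives $\lambda^{2}\lesssim\lambda^{2s+1}N^{1/\beta}$. This is exactly $s\ge\frac12-\frac{m\kappa}{2\beta}$ (resp.\ $s\ge\frac12-\frac1{2\beta}$).

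The main obstacle is exact orthonormality of the $f_k$, since translates of a smooth bump are not orthogonal. I would first try $\widehat f=\mathbf 1_{[\lambda,2\lambda]}$ with $a\in\frac{2\pi}{\lambda}\mathbb Z$. The exponentials $e^{-ika\xi}$ are then exactly orthogonal on an interval of length $\lambda$, so the system is orthogonal in $L^2$ and almost orthogonal in $H^s$. One would then check that the slowly decaying tails $|x|^{-1}$ of the sharp-cutoff packets do not destroy the lower bound on the main intervals; alternatively, one can replace the sharp cutoff by a smooth cutoff and apply Gram--Schmidt with a small perturbation. Positivity of $\mu$ means cross terms never enter, so only the individual packets' lower bounds need to survive.
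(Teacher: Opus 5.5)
Your argument is correct in substance and runs on the same mechanism as the paper. You use packets at spatial scale $\lambda^{-1}$ and a time window $t\le c\lambda^{-m}$ along which $t^\kappa$ sweeps an interval of length $\sim\lambda^{-m\kappa}$. A single packet gives the $q$-condition, and unit-coefficient translates give the $\beta$-condition. The base conditions are taken from the single-function literature exactly as in part (ii) of the paper. The paper handles both branches of $\min\{m\kappa,1\}$ at once by using spacing $\max\{R^{-1},R^{-m\kappa}\}$; your separate $t=0$ argument for the branch $\min\{m\kappa,1\}=1$ is equivalent. Your Dahlberg--Kenig-type sketch for $s\ge\frac14$ is correct. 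One caveat: Walther's vertical-line example only gives $\frac m4<\frac12-\frac m4$, so for $0<m<1$ you need the non-tangential examples of Cho--Shiraki/Yuan--Zhao, as the paper cites.

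The real difference is how orthonormality in $H^s$ is obtained, and you leave that step open. The paper sets $f_j=c_0R^{1/2}\langle D\rangle^{-s}\psi(10R(\cdot-x_j))$ with $\psi\in C_0^\infty((-1,1))$. The functions $\langle D\rangle^sf_j$ then have disjoint supports, so orthonormality is exact for free. The price is that $\widehat\psi$ is not compactly supported, which is why the paper needs $\widehat\psi\ge0$ and the truncation at $|\xi|\le M$ to get the lower bound. Your frequency-compact packets make the lower bound immediate. However, as you note, $\widehat f=\mathbf 1_{[\lambda,2\lambda]}$ with $a\in\frac{2\pi}{\lambda}\mathbb Z$ is only orthogonal in $L^2$, while \eqref{max-main} is assumed only for orthonormal systems.

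The simplest repair is to build the weight into the data: take $\widehat{f_k}(\xi)=\sqrt{2\pi}\,\lambda^{-1/2}\langle\xi\rangle^{-s}\mathbf 1_{[\lambda,2\lambda]}(\xi)e^{-ika\xi}$ with $a\lambda\in 2\pi\mathbb Z\setminus\{0\}$. Then $(\langle D\rangle^s f_k)$ is exactly orthonormal in $L^2$. Since $\langle\xi\rangle^{-s}\sim\lambda^{-s}$ on the support, your bound $|S_m^\gamma f_k|\gtrsim\lambda^{1/2-s}$ on the good sets is unchanged.

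Alternatively, because $\mu_k\ge0$, any finite system $(h_k)$ whose $H^s$-Gram matrix satisfies $G\le CI$ is admissible. Diagonalize $\sum_k\langle\cdot,h_k\rangle_{H^s}h_k=\sum_j\nu_j\langle\cdot,g_j\rangle_{H^s}g_j$ with $(g_j)$ orthonormal. Then $\sum_k|S_m^\gamma h_k|^2=\sum_j\nu_j|S_m^\gamma g_j|^2$ and $\|\nu\|_{\ell^\beta}=\|G\|_{\mathcal C^\beta}\le CN^{1/\beta}$. This makes your first option work as stated. Your smooth-cutoff/Gram--Schmidt route also works, via $|S_m^\gamma h|\le\|\widehat h\|_{L^1}\lesssim\lambda^{1/2-s}\|h\|_{H^s}$ for frequency-localized $h$ and $a\lambda\gg1$, but it is more work. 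Finally, your concern about the $|x|^{-1}$ tails is moot: with nonnegative coefficients they can only increase the left-hand side.
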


\textbf{(i) Proof of $s\ge \max\big\{\frac12-\frac{\min\{m\kappa,1\}}{q}, \frac12-\frac{\min\{m\kappa,1\}}{2\beta}\big\}$.}
Let $\psi\in C_0^\infty((-1,1))$ satisfy $\|\psi\|_2=1$ and $\widehat \psi \ge0$. We may fix $M\ge2$ such that $\int_{|\xi|>M}\widehat \psi(\xi) d\xi \le \frac 14\|\widehat \psi\|_1$.

For $R\gg1$, set $\mathcal J:=\{j \in \mathbb Z: 0< j\max\{R^{-1},R^{-m\kappa}\} \le 1/2\}$. For $j \in \mathcal J$, we define
\[
f_j (x) = c_0R^{\frac12}\langle D\rangle^{-s}\psi(10R(x-x_j))
\]
where $x_j=j\max\{R^{-1},R^{-m\kappa}\}$ and $c_0>0$ is chosen so that $\|f_j\|_{H^s}=1$.
Then $(f_j)_{j \in \mathcal J}$ forms an orthonormal system in $H^s(\R)$, since the supports of $\psi(10R(\cdot -x_j))$ are disjoint. Taking $\mu_j=1$ for $j\in \mathcal J$, and $\mu_j=0$ otherwise, we obtain $\|(\mu_j)\|_{\ell^\beta}\sim \min\{R^{\frac 1\beta}, R^{\frac{m \kappa}\beta}\}$.

Now we observe that
\[ |S_m^\gamma f_j(x,t)| \sim R^{\frac12}\Big| \int e^{i(10R(x-t^\kappa-x_j)\xi + t|10R\xi|^m)} \langle 10R\xi\rangle^{-s} \widehat \psi(\xi)\,d\xi \Big| \gtrsim R^{\frac12-s},
\]
provided $|t|\le cR^{-m}$ and $|x-t^\kappa-x_j| \le cR^{-1}$ for a sufficiently small $c>0$, depending on $M$.
We set
\begin{equation*}
 E_j=\bigcup_{0\leq t\leq cR^{-m}}
\Big( [t^\kappa-cR^{-1},t^\kappa+cR^{-1}]+x_j\Big).
\end{equation*}
For all sufficiently large $R\ge1$, each $E_j$ is contained in $(-1,1)$ with $|E_j|\gtrsim\max\{R^{-1},R^{-m\kappa}\}$.

Since the centers $x_j$ are separated by $\max\{R^{-1},R^{-m\kappa}\}$, these intervals are mutually disjoint. Therefore, $\big| \cup_{j\in \mathcal J}E_j| \sim \max\{R^{-1},R^{-m\kappa}\} \cdot \min\{R,R^{m\kappa}\} \sim 1$.

We now apply \eqref{max-main} with $\mu_j=1$ for $j \in \mathcal J$. We obtain
\[
 R^{1-2s}
 \lesssim\Big\|\sum_{j \in \mathcal J}
 |S_m^\gamma f_j|^2\Big\|_{L_x^{q/2}(B;L_t^\infty(I))}
 \lesssim |\mathcal J|^{\frac 1\beta}
 \sim R^{\frac{\min\{m\kappa,1\}}\beta}.
\]
Letting $R\to\infty$, we obtain
$s\ge\frac12-\frac{\min\{m\kappa,1\}}{2\beta}$.

On the other hand, applying \eqref{max-main} to $f_1$ alone, with $\mu_1=1$ and $\mu_j=0$ for $j\geq2$, gives
\begin{equation*}
1\gtrsim \Big\||S_{m}^{\gamma}f_1|^2\Big\|_
 {L_x^{q/2}(B;L_t^\infty(I))}
 \gtrsim R^{1-2s}|E_1|^{\frac 2q}
 \gtrsim R^{1-2s-2\frac{\min\{m\kappa,1\}}q}.
\end{equation*}
Letting $R\to\infty$, we obtain
$s\geq\frac12-\frac{\min\{m\kappa,1\}}{q}$.

\smallskip

\textbf{(ii) Proof of the remaining conditions.}
For a nonzero $f\in H^s(\R)$ satisfying $\|f\|_{H^s}=1$, take $\mu_1=1$, and $\mu_j=0$ for $j\ge2$. Applying
\eqref{max-main}, since $q\ge2$, we obtain
\begin{equation*}
 \|S_m^\gamma f\|_{L_x^2(B;L_t^\infty(I))}
 \lesssim\|S_m^\gamma f\|_{L_x^q(B;L_t^\infty(I))}
 \lesssim\|f\|_{H^s}.
\end{equation*}
Thus, \eqref{maxL2} holds. The results of Cho--Lee--Vargas \cite{CLV} and Cho--Shiraki \cite{CS21} imply $s\geq1/4$ when $m>1$, while the result of Yuan--Zhao \cite{YZ} implies
$s\geq1/2-m/4$ when $0<m<1$. Combining these conditions with \textbf{(i)}  proves the proposition, since $\min\{m\kappa,1\}=m\kappa$ when $0<m<1$.
\qed

\subsection{Sharpness of Remark \ref{rmk:mono}}
 Let $0<m<1$, $\rho\ge1$, and $\theta>0$. Set $\gamma(x,t)=x-\theta t^\rho$. The condition $s>\sigma_{m,1,q,\beta}$ in Remark \ref{rmk:mono} is sharp up to the endpoint. The necessity of $s\ge\frac12-\frac m4$ follows from the single-function result in \cite{CS22}.
Adapting the arguments in \cite{CS22} and  Subsection \ref{subsec:nec}, we obtain the remaining necessary conditions $s\ge \frac12-\frac m{2\beta}$ and $s\ge \frac12-\frac mq$.

Indeed, we take $t$ near $1/2$ and write $t=1/2+\tau$. Observe that $(1/2+\tau)^\rho-(1/2)^\rho\sim \tau$ since $\frac{d}{dt}(t^\rho)>0$ near $t=1/2$. Take the functions $f_j$ defined in \textbf{(i)}, now with $x_j=jR^{-m}$ and $\mathcal J=\{j \in \Z: 0<jR^{-m}\le 1/4\}$ and set
\begin{equation*}
\widehat{g_j}(\xi)=e^{-i(1/2)|\xi|^m+i\theta (1/2)^\rho\xi}\,\widehat f_j(\xi).
\end{equation*}
Clearly, $(g_j)$ remains orthonormal in $H^s(\R)$.
Note that
\begin{equation*}
|S_m^{\gamma}g_j(x,t)|
 =\frac{c_0}{2\pi} R^{1/2}\Big|\int
 e^{i10R(x-x_j-\theta(t^\rho-(1/2)^\rho))\xi+iR^m(t-1/2)|10\xi|^m}
\langle 10R\xi\rangle^{-s}\widehat \psi(\xi)\,d\xi\Big|
\end{equation*}
which is bounded below by a constant times $R^{\frac12-s}$
whenever $ |t-1/2|\leq cR^{-m}$ and $|x-x_j-\theta(t^\rho-(1/2)^\rho)|\leq cR^{-1}$
by the same argument as in $\mathbf{(i)}$.
Since $\theta(t^\rho-(1/2)^\rho)$ ranges over an interval of length comparable to
$R^{-m}$ when $|t-1/2|\le cR^{-m}$, the corresponding sets
\[
E_j=\bigcup_{|t-1/2|\le cR^{-m}} \Big( [\theta (t^\rho-(1/2)^\rho)-cR^{-1},~ \theta (t^\rho-(1/2)^\rho)+cR^{-1}]+x_j\Big)
\]
are intervals of length comparable to $R^{-m}$. Since $E_j$ have bounded overlap, we have $|\cup_{j\in \mathcal J}E_j|\sim 1$.  Taking $\mu_j=1$ for $j\in \mathcal J$ and $\mu_j=0$ otherwise, we obtain $\|\mu\|_{\ell^\beta}\sim R^{m/\beta}$. Applying \eqref{on-small} as in \textbf{(i)} gives $R^{1-2s}\lesssim  R^{m/\beta}$, and hence
$s\ge\frac12-\frac m{2\beta}$.

Applying \eqref{on-small} to $g_1$ alone, as in \textbf{(i)}, gives
$R^{1-2s}R^{-2m/q}\lesssim1$, and hence $s\ge\frac12-\frac mq$.
\qed

\section*{Acknowledgements} 
This work was supported by G-LAMP RS-2024-00443714 (E. Jeong and H. Ko); RS-2024-00339824 (E. Jeong); JBNU research funds for newly appointed professors in 2024 and RS-2026-25471967 (H. Ko). The authors thank Shobu Shiraki for helpful discussions and collaboration on related work.

\section*{AI Statement}
The authors used ChatGPT (OpenAI, GPT-6) to assist with language editing and reference checking. The original mathematical ideas, arguments, and proofs were developed by the authors.

\end{document}